\newtheorem{remark}{Remark}
\newtheorem{theorem}{Theorem}
\newtheorem{proposition}{Proposition}
\newtheorem{definition}{Definition}
\newcommand{\Real}{\mathbb R}
\newcommand{\qt}{\tilde{q}}
\newcommand{\ubar}{\overline{u}}
\newcommand{\ubarinf}{\sigma}
\newcommand{\us}{u^{*}}
\newcommand{\qzero}{q_{0}}
\newcommand{\qone}{q_{1}}
\newcommand{\dqzero}{\partial_x q_{0}}
\newcommand{\dqone}{\partial_x q_{1}}
\newcommand{\qzeros}{q_{0}^{*}}
\newcommand{\qones}{q_{1}^{*}}
\newcommand{\dqzeros}{\partial_x q_{0}^{*}}
\newcommand{\Hmod}{\bar{H}}
\newcommand{\Ionemod}{\bar{I}_1}
\newcommand{\Itwomod}{\bar{I}_2}
\newcommand{\ddqzero}{\partial_{xx} q_{0}}
\newcommand{\qzzv}{(\bm{\qzero})_0}
\newcommand{\qozv}{(\bm{\qone})_0}
\newcommand{\qbp}{\qt}
\newcommand{\qbm}{\qt}
\DeclareMathOperator{\sech}{sech}
\title{A Hyperbolic Approximation of the Nonlinear Schr\"odinger Equation}
\author{Abhijit Biswas\thanks{Corresponding author.} \and Laila S. Busaleh \and David I. Ketcheson \and Carlos Mu\~noz-Moncayo \and Manvendra Rajvanshi}
\begin{document}
\maketitle

\abstract{
    We study a first-order hyperbolic approximation of
    the nonlinear Schr\"odinger (NLS) equation.  We show that the system
    is strictly hyperbolic and possesses a modified Hamiltonian structure, along with
    at least three conserved quantities that approximate those of NLS. 
    We provide families of explicit standing-wave solutions to the hyperbolic system,
    which are shown to converge uniformly to ground-state solutions
    of NLS in the relaxation limit.
    The system is formally equivalent to NLS in the relaxation limit, and we
    develop asymptotic preserving discretizations that tend to a consistent discretization
    of NLS in that limit, while also conserving mass.
    Examples for both the focusing and defocusing regimes demonstrate that the
    numerical discretization provides an accurate approximation of the NLS
    solution.
}

\section{Introduction}
We consider the nonlinear Schr\"{o}dinger (NLS) equation
\begin{align}\label{Eq:NLS}
i u_t + u_{xx} + \kappa |u|^2 u & = 0, \quad -\infty < x < \infty, \quad t > 0,
\end{align}
where $u(x,t)$ is a complex-valued function and $\kappa \in \Real$. 
The NLS equation arises in a variety of physical applications, including
fiber optics, Bose-Einstein condensates, and deep water waves.
It possesses a rich mathematical structure and there is an extensive
literature regarding its solution by numerical or analytical means;
see e.g. \cite{yang2010nonlinear} and references therein.

Recently, first-order hyperbolic approximations have been proposed for a variety of
higher-order partial differential equations (PDEs), especially dispersive nonlinear
wave equations (see for example \cite{toro2014advection,mazaheri2016first,chesnokov2019hyperbolic,gavrilyuk2022hyperbolic}).
Among these is a hyperbolic approximation of the (NLS) equation,
proposed by Dhaouadi et. al. \cite{dhaouadi2019extended}.
Unlike other dispersive wave equations, which have third- or higher-order
derivatives, the NLS equation is second-order and complex-valued.
The approach in \cite{dhaouadi2019extended} is based on 
the Madelung transformation, which recasts the NLS equation as a more
typical third-order, real-valued nonlinear dispersive wave system.
The authors develop a first-order hyperbolic approximation of the latter
system of equations.  Therein it is suggested that the
hyperbolic approximation may facilitate the enforcement of relevant conservation
laws and the implementation of non-reflecting boundary conditions.

Here we consider a more direct approach, by a hyperbolic relaxation of the NLS
equation in its natural form \eqref{Eq:NLS}.  Specifically, we consider the 
hyperbolized NLS (referred to herein as NLSH) system 
\begin{subequations} \label{nlsH}
\begin{align}
    i\partial_t q_0 + \partial_x q_1 &= - \kappa |q_0|^2 q_0 \;, \label{nlsH1} \\
    i \tau \partial_t q_1 &= \partial_x q_0 - q_1 \;,
\end{align}
\end{subequations}
which is strictly hyperbolic for all values of the relaxation parameter $\tau>0$.
Formally, as $\tau \to 0$, we have $q_1 \to \partial_x q_0$ and then \eqref{nlsH1}
becomes equivalent to \eqref{Eq:NLS} with $q_0 \approx u$.  The NLSH system was proposed in
\cite{ketcheson2025approximation} and is studied in depth here for the first
time.
Compared to the hyperbolic approximation given in \cite{dhaouadi2019extended},
NLSH avoids the need for the Madelung transformation, yields a
simpler system of equations, and preserves more of the mathematical structure of
the NLS equation.  Specifically, NLSH possesses at least three exactly conserved
quantities that approximate three invariants of NLS and are formally equivalent in the relaxation
limit.  Furthermore, whereas the previous approach of \cite{dhaouadi2019extended} can
be used only in the defocusing ($\kappa<0$) case, the NLSH system is valid for approximation 
of both focusing and defocusing NLS.  
Finally, the system developed here has a hyperbolic part that is linear and
is strictly (symmetric) hyperbolic for all values of the relaxation parameter.

The rest of the paper is organized as follows.  In Section \ref{sec:structure} we
review the Hamiltonian structure of NLS and then analyze the corresponding structure
of NLSH, proving that NLSH is a hyperbolic system with three nonlinear conserved quantities.
In Section \ref{sec:standing-waves} we study standing-wave solutions of
the NLSH system, providing explicit solutions and proving that they converge to
ground-state solutions of NLS 
uniformly and linearly in $\tau$.  
In Section \ref{sec:discretization} we propose an implicit-explicit
numerical discretization of NLSH and show that it is mass conserving, asymptotic preserving and,
for some time discretizations, asymptotically accurate.
In Section \ref{sec:tests} we illustrate the behavior of the numerical methods and
the NLSH system itself through numerical solutions.

\section{Hamiltonian structure} \label{sec:structure}
In this section we briefly review the Hamiltonian structure and
conserved quantites of the NLS equation
and then study the analogous structures and conserved quantities of the NLSH system.

\subsection{NLS as a Hamiltonian system}
Let $\us$ denote the conjugate field of the complex field $u$. The NLS equation \eqref{Eq:NLS} for $u$ can be 
written as a Hamiltonian partial differential equation (PDE) \cite[Eqn.~64.9]{lanczos1949}:
\begin{align}
i\frac{\partial u}{\partial t} & = \frac{\delta H}{\delta \us} \;,
\end{align}
where
\begin{align}\label{Eq:Hamilton_NLS}
H = \int_{\mathbb{R}} \left(|u_x|^2 - \frac{\kappa}{2}|u|^4\right) dx,
\end{align}
and $\frac{\delta H}{\delta \us}$ is the variational derivative of the Hamiltonian with respect to $\us$. 
It is well known that the NLS equation possesses infinitely many conserved quantities, of which the first two are given by
\cite{yang2010nonlinear}
\begin{subequations} \label{NLS-conserved}
\begin{align}
    I_{1}(t) &= - \int_{\mathbb{R}} |u|^2 \, dx \;,
\end{align}
and
\begin{align}
    I_{2}(t) &= -i \int_{\mathbb{R}} \us u_x \, dx \;.
\end{align}
\end{subequations}
In accordance with Noether's Theorem, these can be associated with symmetries of NLS through the flows
\begin{align}
    \begin{aligned}
        i\frac{\partial u}{\partial t} &= \frac{\delta I_1}{\delta \us} = -u\;,
    \end{aligned}
    & \quad
    \begin{aligned}
        i\frac{\partial u}{\partial t} &= \frac{\delta I_2}{\delta \us} = -i u_x \;,
    \end{aligned}
\end{align}
which represent rotation and translation of the solution $u$, respectively. 
Note that the factors $-1$ and $-i$ in \eqref{NLSH-conserved} are included to ensure that these 
flow equations are consistent with those in \cite{duran2000numerical}.
The system \eqref{Eq:NLS} possesses solitary wave 
solutions whose time evolution involves only rotation (in the complex plane) and/or translation
along the $x$-axis. 

\subsection{NLSH as a Hamiltonian system}
The NLSH system \eqref{nlsH} \cite{ketcheson2025approximation} has a relative equilibrium structure similar to that of 
the NLS equation. Consider the modified Hamiltonian function $\bar{H}$
\begin{align}
    \Hmod = \int_{\mathbb{R}} \left( \qones \dqzero  + \qone \dqzeros  - |\qone|^2 - \frac{\kappa}{2}|\qzero|^4\right) dx \;,
    \label{Eq:Hamilton_NLSH_modified}
\end{align} 
where $\qzeros$ and $\qones$ denote the conjugate fields of the complex fields $\qzero$ and $\qone$, respectively.
The NLSH system \eqref{nlsH} can be written in Hamiltonian PDE form as $i Q_t = \bar{\mathcal{J}} \frac{\delta \bar{H}}{\delta Q^*}$,
with
\begin{align*}
    Q =  \begin{bmatrix}
         \qzero \\
         \qone 
     \end{bmatrix}, \quad
     \bar{\mathcal{J}} = 
     \begin{bmatrix}
        1 & 0 \\
        0 & \tau^{-1}  
     \end{bmatrix}, \quad
     \frac{\delta \bar{H}}{\delta Q^*} = 
     \begin{bmatrix}
        \frac{\delta \bar{H}}{\delta \qzeros} \\
        \frac{\delta \bar{H}}{\delta \qones}   
     \end{bmatrix} \;,
\end{align*}
where $\frac{\delta \bar{H}}{\delta \qzeros}$ and $\frac{\delta \bar{H}}{\delta \qones}$ are the variational derivatives of 
the modified Hamiltonian $\Hmod$ with respect to $\qzeros$ and $\qones$, respectively, and are given by
\begin{align}
     \begin{bmatrix}
        \frac{\delta \bar{H}}{\delta \qzeros} \\
        \frac{\delta \bar{H}}{\delta \qones} 
     \end{bmatrix} & = 
     \begin{bmatrix}
         - \dqone - \kappa |\qzero|^2\qzero \\
         \dqzero - \qone  
     \end{bmatrix}
     \;.
\end{align}
Besides the Hamiltonian, the NLSH system also implies conservation of the quantities
\begin{subequations} \label{NLSH-conserved}
\begin{align}
    \Ionemod(t) &=  - \int_{\mathbb{R}} \left( |\qzero|^2 + \tau |\qone|^2\right) dx \;, \\
    \Itwomod(t) &= -i \int_{\mathbb{R}} \left( \qzeros  \dqzero + \tau \qones  \dqone \right) dx, \;
\end{align}
\end{subequations}
which are modifications of the first integrals $I_1$ and $I_2$, respectively, of the original NLS equation \eqref{Eq:NLS}.
It can be verified by direct computation that
\begin{subequations} \label{NLSH-conservation-laws}
\begin{align}
    \int_{\mathbb{R}} \frac{\partial}{\partial t}\left[- \left(|\qzero|^2 + \tau |\qone|^2 \right) \right] dx = -i \int_{\mathbb{R}} \frac{\partial}{\partial x} \left[ \qzeros \qone - \qzero \qones \right] dx \;,
\end{align}
and 
\begin{align}
    \int_{\mathbb{R}} \frac{\partial}{\partial t} \left[-i \left(\qzeros  \dqzero + \tau \qones  \dqone \right) \right] dx = - i \int_{\mathbb{R}} \frac{\partial}{\partial x} \left[ i \qzeros  \dqone - i \qones \dqzero + \frac{i \kappa}{2}|\qzero|^4 \right] dx \;.
\end{align}
\end{subequations}
so $\bar{I}_1$ and $\bar{I}_2$ are conserved for the hyperbolized system \eqref{nlsH}. Note that the NLSH 
system \eqref{nlsH} can be written as $A Q_t + B Q_x = S(Q)$, with 
\begin{align*}
    A = \begin{bmatrix} 
        1  & 0 \\
        0 & \tau 
    \end{bmatrix} \;, \quad
    B = 
    \begin{bmatrix}
        0  & -i \\
        i & 0 
    \end{bmatrix} \;.
\end{align*}
Here $A$ and $B$ are Hermitian, and $A^{-1}B$ has eigenvalues $\pm
\sqrt{\tau^{-1}}$ that are distinct and real for all $\tau>0$.  Thus we have
proved the following: 
\begin{theorem}
    The NLSH equations \eqref{nlsH} with $\tau>0$ are strictly hyperbolic and possess
    three conserved quantities: $\Hmod, \Ionemod, \Itwomod$.
\end{theorem}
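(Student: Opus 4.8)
The plan is to verify the two independent claims in the statement separately: strict hyperbolicity of the linear principal part, and the conservation of the three listed quantities. Both reduce to direct computations, and the excerpt has essentially laid out the necessary ingredients already, so the task is to organize and confirm them.

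For strict hyperbolicity, I would start from the quasilinear form $A Q_t + B Q_x = S(Q)$ with the Hermitian matrices $A$ and $B$ given in the excerpt. Since $A$ is Hermitian positive definite for $\tau>0$, the system is strictly hyperbolic precisely when $A^{-1}B$ has real, distinct eigenvalues. I would compute the characteristic polynomial of
\begin{align*}
A^{-1}B = \begin{bmatrix} 0 & -i \\ i\tau^{-1} & 0 \end{bmatrix},
\end{align*}
obtaining $\lambda^2 - \tau^{-1} = 0$, so $\lambda = \pm\sqrt{\tau^{-1}}$. For every $\tau>0$ these two eigenvalues are real and distinct, which establishes strict hyperbolicity. (One could equivalently invoke the general fact that a system with Hermitian $A>0$ and Hermitian $B$ is symmetrizable hence hyperbolic, with strictness following from distinctness of the generalized eigenvalues of the pencil $\det(B-\lambda A)=0$.)

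For the conserved quantities, I would treat $\Hmod$, $\Ionemod$, and $\Itwomod$ in turn. For $\Ionemod$ and $\Itwomod$ the work is already encapsulated in the conservation laws \eqref{NLSH-conservation-laws}: each asserts that the time derivative of the density integrand equals a spatial derivative of a flux. I would verify these two identities by substituting the evolution equations \eqref{nlsH1} and its companion (and their complex conjugates) to replace $\partial_t q_0$, $\partial_t q_1$, $\partial_t q_0^*$, $\partial_t q_1^*$ in the left-hand integrands, then collecting terms to match the claimed flux divergences. Assuming the fields decay (or are periodic) so that the spatial integrals of these exact $x$-derivatives vanish, we get $\frac{d}{dt}\Ionemod = \frac{d}{dt}\Itwomod = 0$. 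For $\Hmod$, conservation is the standard consequence of the Hamiltonian structure: with $iQ_t = \bar{\mathcal{J}}\,\delta\Hmod/\delta Q^*$ and $\bar{\mathcal{J}}$ a constant real diagonal matrix, one computes $\frac{d}{dt}\Hmod = 2\,\mathrm{Re}\int (\delta\Hmod/\delta Q^*)^\dagger Q_t\,dx$ and substitutes $Q_t = -i\bar{\mathcal{J}}\,\delta\Hmod/\delta Q^*$; the integrand becomes $-i$ times a manifestly real quantity times $i$, i.e.\ the skew-Hermitian structure forces $\frac{d}{dt}\Hmod = 0$. Alternatively, one derives a local conservation law for the energy density directly from \eqref{nlsH}.

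The main obstacle I anticipate is purely bookkeeping rather than conceptual: the computation for $\Itwomod$ involves the nonlinear term $\kappa|q_0|^2 q_0$, and one must carefully track the real and imaginary parts and the $\partial_x$ acting on products so that the nonlinear contributions assemble into the exact flux $\frac{i\kappa}{2}|q_0|^4$ claimed in \eqref{NLSH-conservation-laws} rather than leaving a residual bulk term. Getting the integration-by-parts and conjugation signs right (especially the factors of $i$ and $\tau$) is where an error is most likely to creep in, so I would double-check the $\Itwomod$ identity term-by-term. The hyperbolicity half, by contrast, is immediate once the $2\times2$ eigenvalue computation is written down.
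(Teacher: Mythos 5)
Your proposal is correct and follows essentially the same route as the paper: strict hyperbolicity from the real, distinct eigenvalues $\pm\sqrt{\tau^{-1}}$ of $A^{-1}B$ for the Hermitian pair $(A,B)$, conservation of $\Ionemod$ and $\Itwomod$ by verifying the local conservation laws \eqref{NLSH-conservation-laws} through direct substitution of the evolution equations, and conservation of $\Hmod$ as a consequence of the Hamiltonian form $iQ_t = \bar{\mathcal{J}}\,\delta\Hmod/\delta Q^*$. The only difference is that you spell out the $\Hmod$ and eigenvalue computations in slightly more detail than the paper, which simply asserts them.
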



\begin{remark}
   The flows of the vector fields given by $\bar{\mathcal{J}} \delta \bar{I}_1$ and $\bar{\mathcal{J}} \delta \bar{I}_2$ 
   correspond respectively to rotation and translation of the solution.
\end{remark}

\begin{remark}
   The wave speeds $\sqrt{\tau^{-1}}$ become arbitrarily large as $\tau\to0$,
   leading to stiffness and necessitating the use of implicit time integration;
   see Section \ref{sec:imex}.
\end{remark}

\section{Standing-wave solutions} \label{sec:standing-waves}
A key feature of the NLS equation is the existence of ground state solutions \cite{yang2010nonlinear},
which have the form
\begin{align}
\label{eq:NLS ground state}
u(x,t) = \exp \left({i\mu t }\right) u^\pm(x).
\end{align}
Here, $\mu \in \Real$ and $u^\pm$ is a real-valued function given by
\begin{subequations} \label{standing-amplitude}
\begin{align}
    u^+(x) & =\sqrt{2}\ubarinf \mathrm{sech}(\sqrt{\mu}x \pm K_1), \quad  \text{if } \; \mu, \kappa > 0, \label{eq:explicit_solitary_NLS}\\
    u^-(x) & =\ubarinf \mathrm{tanh}(\pm \ubarinf \sqrt{-{\kappa}/{2}}x\pm K_0 ), \quad  \text{if } \; \mu, \kappa <0, \label{eq:Exact_traveling_front_NLS}
\end{align}
\end{subequations}
where
\begin{align}
    \label{eq:NLS bound state constants}
\ubarinf = \sqrt{\frac{\mu}{\kappa}},
\quad K_0 = \mathrm{tanh}^{-1}\left(\frac{u^-(0)}{\ubarinf}\right),
\quad K_1 = \mathrm{sech}^{-1}\left(\frac{u^+(0)}{\sqrt{2}\ubarinf}\right).
\end{align}

The goal of this section is to show the existence of standing-wave solutions of the NLSH system \eqref{nlsH} 
and analyze their relation to the ground state solutions of the NLS equation.
For this purpose we consider the standing-wave ansatz
\begin{align}
    \label{eq:translation_rotation_ansatz}
    q_{g}(x,t)
    =
    \mathrm{exp}\left({i\mu t}\right) \qt(x),
\end{align}
where $\mu$ is a real parameter and $\qt(x) = [\qt_0(x), \qt_1(x)]^T$ is a vector-valued function.
We obtain then the following system of ordinary differential equations (ODEs):
\begin{subequations}
    \label{eq:standing_wave_NLSH}
\begin{align}
\qt_0' &= (1-\mu\tau)\qt_1,\label{Eq:travel_wave_q0} \\
\qt_1' &= \left({\mu}-\kappa {\qt_0}^2\right)\qt_0  \label{Eq:travel_wave_q1}.
\end{align}
\end{subequations}

System \eqref{eq:standing_wave_NLSH} has three equilibrium points $(\qt_0,\qt_1)$:
\begin{align}
\left( - \ubarinf, 0 \right), \quad (0, 0), \quad \left( + \ubarinf, 0 \right).
\end{align}
We are mainly interested in solitary waves and fronts, so we assume from now
on that $\kappa$ and $\mu$ have the same sign, in which case all three equilibria
are real (otherwise, there is only one real equilibrium and no localized waves or
fronts are possible).
The Jacobian of the right-hand side of \eqref{eq:standing_wave_NLSH} is
\begin{align}
J(\qt) = \begin{bmatrix}
         0 & 1 - \tau \mu \\
         \mu - 3 \kappa \qt_0^2 & 0 
    \end{bmatrix},
\end{align}
and has eigenvalues
\begin{align}
\lambda_{1,2} = \pm \sqrt{(\mu - 3 \kappa {\qt_0}^2)(1 - \tau \mu)}.
\end{align}


Thus the local dynamics of eq. \eqref{eq:standing_wave_NLSH} are determined by the sign of $\kappa$
(or $\mu$) and of $1-\tau\mu$.  There are three interesting cases:
\begin{itemize}
    \item $\kappa<0$ (defocusing), $\mu<0$ :  In this case, shown in Figure
        \ref{fig:phase_plane2}, the left and right equilibria are saddles, with
        heteroclinic connections between them representing fronts, which we study
        in Section \ref{sec:traveling-fronts-defocusing}.
    \item $\kappa>0$ (focusing), $\mu>0$, $\mu \tau>1$:  The topology is again as shown in
        Figure \ref{fig:phase_plane2}.
    Thus, it is also possible to find standing front solutions in this regime.
    However, these solutions arise exclusively as a consequence of the hyperbolic approximation,
    and they do not correspond to any localized solutions of the NLS equation.
    Moreover, proceeding as in Proposition \ref{prop:uniform_convergence front} below, one can show that these
    solutions converge uniformly and linearly in $\tau\mu$ to zero as $\tau\mu \to 1$.
    \item $\kappa>0$ (focusing), $\mu>0$, $\mu \tau<1$: In this case, shown in
        Figure \ref{fig:phase_plane}, the middle equilibrium is a saddle while the left
        and right points are centers.  There exist homoclinic connections, corresponding
        to solitary wave solutions, which we study in Section \ref{sec:solitary-waves-focusing}.
\end{itemize}


\begin{figure}
    \centering
    \begin{subfigure}[b]{0.49\textwidth}
        \centering
        \includegraphics[width=\textwidth]{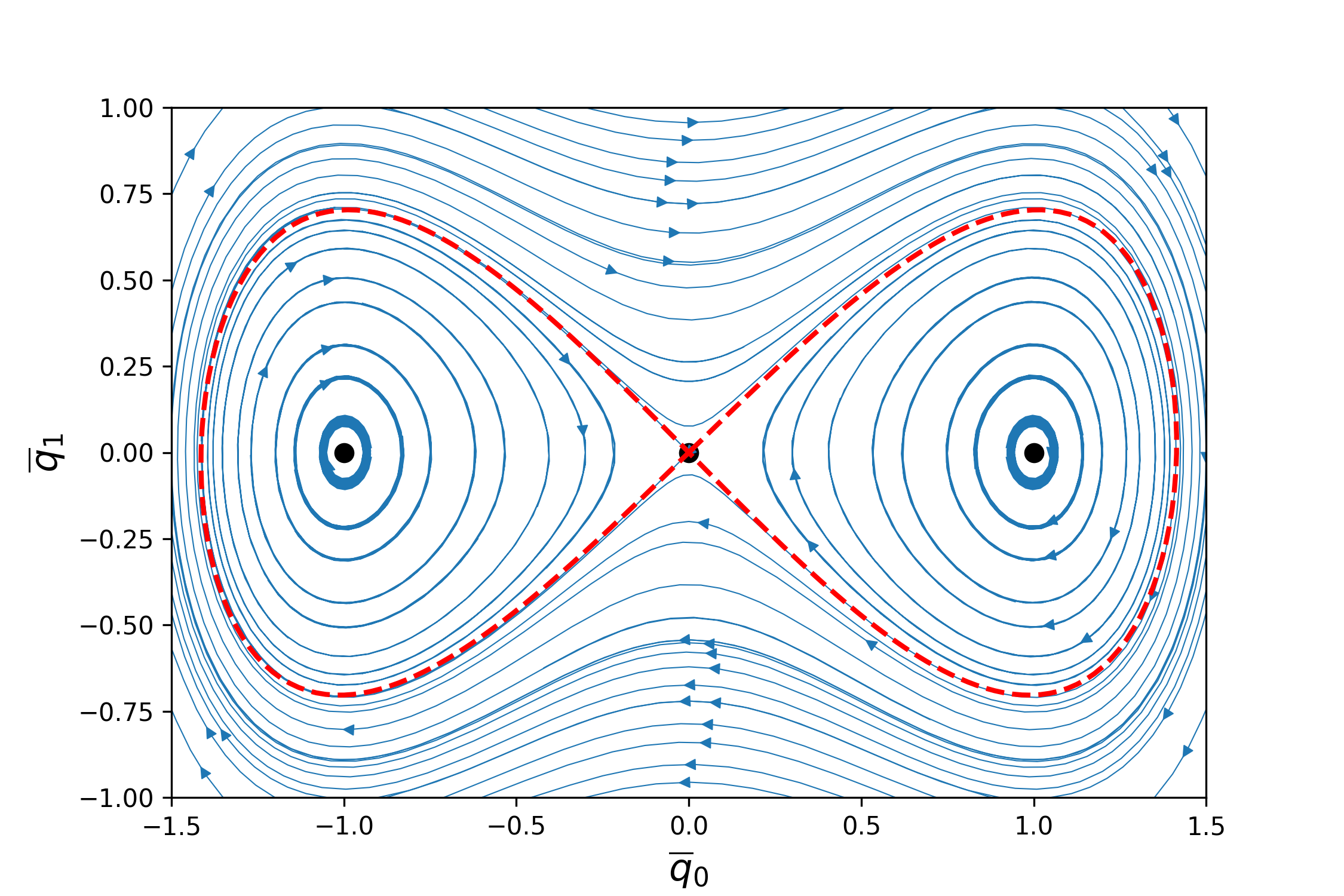}
        \caption{Focusing case, $\kappa=1$: homoclinic orbits of $\qt^*=(0,0)$
        separate the phase plane into three regions}
        \label{fig:phase_plane}
    \end{subfigure}
    \hfill
    \begin{subfigure}[b]{0.49\textwidth}
        \centering
        \includegraphics[width=\textwidth]{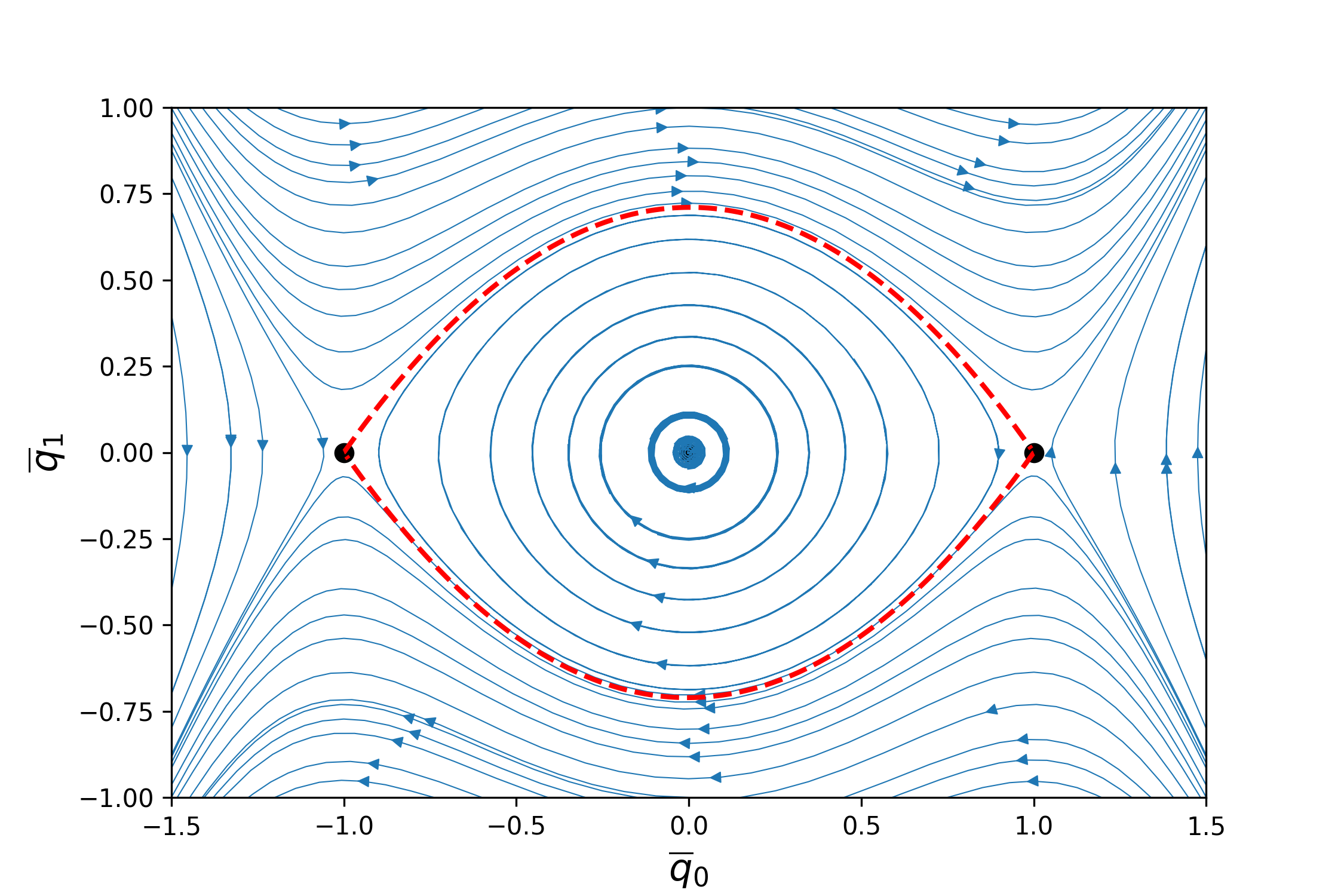}
        \caption{Defocusing case, $\kappa=-1$: heteroclinic orbits connect the equilibria
        $\qt^{\pm}=(\pm \ubarinf,0)$
        }
        \label{fig:phase_plane2}
    \end{subfigure}
    \caption{Phase portraits illustrating the ground state solutions of the NLSH system with $\tau=10^{-3}$ and
    $\mu=\kappa$}
    \label{fig:phase_portraits}
\end{figure}

\subsection{Standing fronts}
\label{sec:traveling-fronts-defocusing}
In the defocusing case ($\kappa, \mu <0$) it is possible to find explicit stationary front solutions for the NLSH system \eqref{eq:standing_wave_NLSH}.
Moreover, we can show that they converge to the amplitude \eqref{eq:Exact_traveling_front_NLS} of 
\emph{dark} soliton solutions to the NLS equation \eqref{Eq:NLS} in the limit $\tau \to 0$.

By substituting \eqref{Eq:travel_wave_q0}
into \eqref{Eq:travel_wave_q1}, we get
\begin{align}
    \label{eq:ODE_NLSH}
    \frac{1}{1-\mu \tau} \qbm_0'' - \mu \qbm_0 + \kappa \qbm_0^3 = 0,
\end{align}
which, after multiplying by $\qbm_0'$ and integrating once, yields
\begin{align}
    \label{eq:weak_ODE_NLSH}
    \frac{1}{1-\mu\tau}[\qbm_0']^2 -  \mu \qbm_0^2 + \frac{\kappa}{2} \qbm_0^4 = \frac{\kappa}{2}\ubarinf^4-\mu\ubarinf^2.
\end{align}
Here, we have assumed that $\lim_{x\to +\infty} |\qbm_0(x)| 
= \ubarinf$ and $\lim_{x\to +\infty} \qbm_1(x) = \lim_{x\to +\infty} \qbm_0'(x) = 0$,
which are the boundary conditions we expect for a standing front solution of the NLSH system
from the phase plane analysis conducted earlier.
Using separation of variables, equation \eqref{eq:weak_ODE_NLSH} can be  written as
\begin{subequations}
    \label{eq:ODE_NLSH_separated}
\begin{align}
    \frac{1}{\sqrt{{1-\mu\tau}}}\int \frac{d\qbm_0}{\sqrt{-\frac{\kappa}{2}(\qbm_0)^4
    +\mu(\qbm_0)^2+\frac{\kappa}{2}\ubarinf^4-\mu\ubarinf^2}} &=  \pm \int dx \;, \\
    \frac{1}{\sqrt{{1-\mu\tau}}} \int \frac{d\qbm_0}{\sqrt{
        (\ubarinf^2-(\qbm_0)^2)[-\mu + \frac{\kappa}{2}(\ubarinf^2+(\qbm_0)^2)] }} &=  \pm \int dx \;, \\  
    \sqrt{-\frac{2}{\kappa({1-\mu\tau})}}\int \frac{d\qbm_0}{\ubarinf^2-(\qbm_0)^2} &=  \pm \int dx \;.
\end{align}
\end{subequations}


From \eqref{eq:ODE_NLSH_separated}, and assuming that  $\qbm_0(0) = u^-(0)$, we obtain the closed-form expressions
\begin{subequations} \label{eq:Exact_traveling_front_NLSH}
\begin{align}
    \qbm_0^-(x) := \ubarinf \mathrm{tanh}\left(\pm \ubarinf \sqrt{\frac{\kappa(\mu\tau-1)}{2}}x\pm K_0
    \right),
\end{align}
and 
\begin{align}
    \qbm_1^-(x) := \frac{1}{1-\mu\tau}\qbm_0'(x) = \frac{\pm \ubarinf^2 \sqrt{\frac{\kappa(\mu\tau-1)}{2}}}{1-\mu\tau}
    \mathrm{sech}^2\left(\pm \ubarinf \sqrt{\frac{\kappa(\mu\tau-1)}{2}}x\pm K_0\right).
\end{align}
\end{subequations}

\begin{proposition}
    \label{prop:uniform_convergence front}
    The family of standing front solutions $\{\qbm_0\}_{\tau>0}$ \eqref{eq:Exact_traveling_front_NLSH} to the defocusing NLSH system \eqref{eq:ODE_NLSH} converges uniformly, and linearly in $\tau$, to the 
    ground state solution $\ubar^{-}$ \eqref{eq:Exact_traveling_front_NLS} of the defocusing NLS equation as $\tau\to 0$.
    Moreover, $\{\qbm_1\}_{\tau>0}$ also converges uniformly and at a linear rate to $(\ubar^{-})'$.
\end{proposition}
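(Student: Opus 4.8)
The plan is to exploit that both the NLSH front $\qbm_0^-$ of \eqref{eq:Exact_traveling_front_NLSH} and the NLS front $u^-$ of \eqref{eq:Exact_traveling_front_NLS} have the common shape $\ubarinf\tanh(\alpha x + K_0)$, differing only through the rate parameter inside the hyperbolic tangent (I fix the $\pm$ signs so the two fronts share orientation). Write $\alpha_0 := \ubarinf\sqrt{-\kappa/2}$ for the NLS rate and $\alpha_\tau := \ubarinf\sqrt{\kappa(\mu\tau-1)/2}$ for the NLSH rate. The first step is the elementary identity $\alpha_\tau = \alpha_0\sqrt{1-\mu\tau}$, valid because $\kappa,\mu<0$ make both $-\kappa/2>0$ and $1-\mu\tau>1$. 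Since the construction imposes $\qbm_0^-(0)=u^-(0)$, the two profiles share the same amplitude $\ubarinf$ and the same phase constant $K_0$ from \eqref{eq:NLS bound state constants}, so the entire discrepancy is driven by $\alpha_\tau-\alpha_0$. The estimate $|\sqrt{1-\mu\tau}-1| = |\mu|\tau/(\sqrt{1-\mu\tau}+1) \le |\mu|\tau/2$ then yields $|\alpha_\tau-\alpha_0| \le \tfrac12\alpha_0|\mu|\tau$, which is the linear-in-$\tau$ rate at the level of parameters.

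Second, I would convert this into a uniform estimate on the profiles. Since
\[
|\qbm_0^-(x)-u^-(x)| = \ubarinf\,|\tanh(\alpha_\tau x + K_0) - \tanh(\alpha_0 x + K_0)|,
\]
the naive Lipschitz bound $|\tanh a - \tanh b|\le|a-b|$ gives only $\ubarinf|\alpha_\tau-\alpha_0|\,|x|$, which blows up as $|x|\to\infty$; controlling this tail is the crux of the proof. The remedy is to view the difference as a variation in the rate and apply the mean value theorem in $\alpha$: it equals $\ubarinf\, x\,\sech^2(\beta x + K_0)(\alpha_\tau-\alpha_0)$ for some intermediate $\beta\in(\alpha_0,\alpha_\tau)$. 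Because $\beta\ge\alpha_0>0$, the substitution $t=\beta x$ gives $\sup_x |x|\sech^2(\beta x + K_0) \le \alpha_0^{-1}\sup_t |t|\sech^2(t + K_0) =: \alpha_0^{-1}M(K_0) < \infty$, where finiteness follows from the exponential decay of $\sech^2$ dominating the linear growth of $|t|$. Combining with the parameter estimate gives $\sup_x|\qbm_0^- - u^-| \le \tfrac12\ubarinf|\mu|M(K_0)\,\tau$, i.e. convergence uniform in $x$ and linear in $\tau$.

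For the second component I would work from the closed form $\qbm_1^-(x) = \tfrac{\ubarinf\alpha_\tau}{1-\mu\tau}\sech^2(\alpha_\tau x + K_0)$ and compare with $(u^-)'(x) = \ubarinf\alpha_0\sech^2(\alpha_0 x + K_0)$. Splitting the error by the triangle inequality into an amplitude mismatch and an argument mismatch,
\[
|\qbm_1^- - (u^-)'| \le \Big|\tfrac{\ubarinf\alpha_\tau}{1-\mu\tau}-\ubarinf\alpha_0\Big|\,\sech^2(\alpha_\tau x + K_0) + \ubarinf\alpha_0\,|\sech^2(\alpha_\tau x + K_0)-\sech^2(\alpha_0 x + K_0)|,
\]
the first term is $O(\tau)$ because $\alpha_\tau-\alpha_0(1-\mu\tau) = (\alpha_\tau-\alpha_0)+\alpha_0\mu\tau = O(\tau)$ while $1-\mu\tau\ge1$ and $\sech^2\le1$, and the second term is handled exactly as in the first component: $\sech^2$ is Lipschitz with derivative $-2\sech^2\tanh$, which again decays exponentially, so the mean-value argument in $\alpha$ tames the $|x|$ tail and produces another $O(\tau)$ bound.

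The only genuine obstacle I anticipate is the uniformity in $x$ appearing in the last two steps, namely the tension between the factor $|x|$ generated by differentiating with respect to the rate parameter and the decay of $\sech^2$. Once the elementary facts $\sup_t|t|\sech^2(t+K_0)<\infty$ and its analogue for $\sech^2$ differences are isolated, the remainder is bookkeeping with the triangle inequality and the $\sqrt{1-\mu\tau}$ expansion recorded in the first step.
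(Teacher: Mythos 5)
Your proof is correct, and it rests on the same two pillars as the paper's: a mean value theorem to convert the difference of hyperbolic tangents into a product of the form $|x|\,\sech^2(\cdot)\cdot\mathcal{O}(\tau)$, followed by the observation that the exponential decay of $\sech^2$ dominates the linear growth of $|x|$ uniformly. The variation is in where you apply the MVT: the paper differentiates in the spatial argument, viewing the two profiles as $f(\sqrt{1-\mu\tau}\,x)$ and $f(x)$ for a fixed $f$, which forces it to restrict to $\tau\le\tau_0$ and run a set-nesting argument to locate the intermediate point $\xi$ in a $\tau$-independent set; you instead differentiate in the rate parameter $\alpha$, and since the intermediate rate $\beta$ automatically satisfies $\beta\ge\alpha_0$, the rescaling $t=\beta x$ gives a constant $\alpha_0^{-1}M(K_0)$ that is uniform in $\tau$ with no auxiliary $\tau_0$. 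Your exact bound $|\sqrt{1-\mu\tau}-1|\le|\mu|\tau/2$ (valid here because $1-\mu\tau\ge 1$ in the defocusing case) also replaces the paper's Taylor expansion and its residual $\mathcal{O}(\mu^2\tau^2)$ term, and you carry out the $\qbm_1$ estimate explicitly, splitting amplitude and argument mismatches, where the paper only asserts that a similar argument works. Net effect: same strategy, slightly cleaner constants and a complete treatment of the second component.
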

\begin{proof}
    Let $x\in \Real\setminus \{0\}$ ($\qbm_0(0)= u^{-}(0)$) and $\tau>0$. By applying the Mean Value Theorem to the function
    \begin{align}
        f(x) = \tanh\left(\pm \ubarinf \sqrt{\frac{-\kappa}{2}}x \pm K_0\right),
    \end{align}
    we have that there exists some $\xi$ in the interval
    \begin{align}
        \label{eq:MVT interval}
        \left[\sqrt{1-\mu\tau} x,x\right] \quad \text{or} \quad \left[x,\sqrt{1-\mu\tau} x\right]
    \end{align}
    (depending on the sign of $x$) such that
    \begin{subequations}
        \label{eq:applying MVT}
        \begin{align}
        |\qbm_0(x) - u^{-}(x)| &= |\ubarinf| \left| \tanh\left(\pm \ubarinf \sqrt{\frac{\kappa(\mu\tau-1)}{2}}x \pm K_0\right) - \tanh\left(\pm \ubarinf \sqrt{-\kappa/2} x \pm K_0\right) \right| \\
        &\leq \ubarinf^2 \sqrt{-\kappa/2} \left|\sqrt{1-\mu\tau}-1\right| |x|\mathrm{sech}^2\left(\pm \ubarinf \sqrt{\frac{-\kappa}{2}}\xi \pm K_0\right).
    \end{align}
    \end{subequations}
    From \eqref{eq:MVT interval}, we have that 
    \begin{align}
        \label{eq:set MVT}
        \xi \in S(x,\tau) :=\left[-\sqrt{1-\mu\tau}|x|,-|x|\right] \cup \left[|x|,\sqrt{1-\mu\tau}|x|\right].
    \end{align}
    Without loss of generality, we can assume that $\tau$ is bounded from above.
    Then, there is some $\tau_0>0$ such that 
    \begin{align}
        \label{eq:set nesting}
         S(x,\tau) \subseteq S(x,\tau_0),\quad  \forall \tau\in]0,\tau_0[\,.
    \end{align}
    From \eqref{eq:applying MVT}, \eqref{eq:set MVT}, and \eqref{eq:set nesting}, we obtain
    \begin{align}
        \label{eq:uniform_conv_1}
        |\qbm_0(x) - u^{-}(x)| \leq \ubarinf^2 \sqrt{-\kappa/2} |\sqrt{1-\mu\tau}-1| |x|\mathrm{sech}^2\left(\pm \ubarinf \sqrt{\frac{-\kappa}{2}}\overline{\xi}|x| \pm K_0\right),
    \end{align}
    for some $\overline{\xi}\in \overline{S}(\tau_0):=[-\sqrt{1-\mu\tau_0},-1] \cup [1,\sqrt{1-\mu\tau_0}]$.
    Due to the exponential decay of the $\mathrm{sech}^2$ function and since $\overline{S}(\tau_0)$ does not depend on $x$,
    we can find a constant $L_0<+\infty$ such that
    \begin{align}
        \label{eq:bound_sech}
        |x| \mathrm{sech}^2\left(\pm \ubarinf \sqrt{-\frac{\kappa}{2}} \overline{\xi} |x| \pm K_0\right) <L_0, \quad \forall x\in \Real.
    \end{align}
    
    Taking a Taylor expansion of $\sqrt{1-\mu \tau}$ around $\tau=0$, we have that $\sqrt{1-\mu \tau} = 1 - \frac{\mu \tau}{2} + \mathcal{O}(\mu^2 \tau^2)$.
   Thus, from \eqref{eq:uniform_conv_1} and \eqref{eq:bound_sech}, we obtain
    \begin{align}
        \label{eq:uniform_conv_2}
         \sup_{x\in\mathbb{R}}|\qbm_0(x) - u^{-}(x)| \leq L_0 \ubarinf^2 \sqrt{-\kappa/2} \left(|\mu\tau/2| + |\mathcal{O}(\mu^2 \tau^2)|\right).
    \end{align} 
    One can follow a similar argument to show the uniform convergence of $\qbm_1$ to $\ubar'$.
\end{proof}

The standing front solutions \eqref{eq:Exact_traveling_front_NLSH} to the NLSH system 
(for different values of $\tau$)
and corresponding solutions \eqref{eq:Exact_traveling_front_NLS} of the NLS
equation are depicted in Figure \ref{fig:traveling_fronts}.

\begin{figure}
    \centering
    \includegraphics[width=0.85\textwidth]{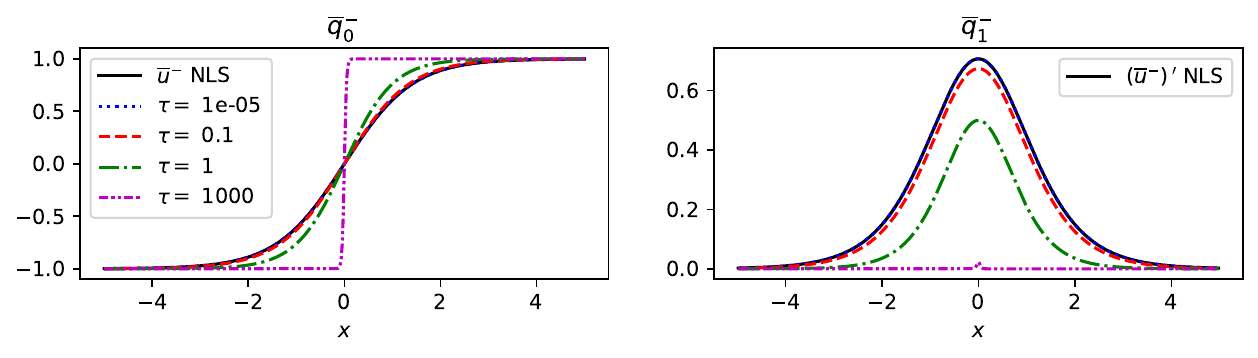}
    \caption{Standing front solutions to the  NLSH
    system \eqref{eq:Exact_traveling_front_NLSH}  and the NLS equation \eqref{eq:Exact_traveling_front_NLS}
    with $\kappa=\mu=-1$, $K_0=0$, and different values of $\tau$}
    \label{fig:traveling_fronts}
\end{figure}

\subsection{Standing solitary waves}
\label{sec:solitary-waves-focusing}
In the focusing case ($\kappa, \mu >0$ and $\tau \mu<1$)
it is also
possible to find explicit solitary wave solutions for the NLSH system \eqref{eq:standing_wave_NLSH}
that converge to the amplitude \eqref{eq:explicit_solitary_NLS} of
\emph{bright} soliton solutions to the NLS equation \eqref{Eq:NLS} as $\tau\to 0$.
Again, we multiply \eqref{eq:ODE_NLSH} by $\qt_0'$ and integrate once, but now with the boundary condition
$\lim_{x\to +\infty} \qt_0(x) = 0$ as expected from the phase plane analysis.
The resulting ODE can be solved explicitly, leading to
\begin{subequations} \label{eq:explicit_solitary_NLSH}
\begin{align}
    \label{eq:explicit_solitary_NLSH_q0}
    \qbp_0^+(x) := \sqrt{2}\ubarinf\mathrm{sech}(\sqrt{\mu(1-\mu\tau)}x \pm K_1),
\end{align}
and
\begin{align}
    \label{eq:explicit_solitary_NLSH_q1}
    \qbp_1^+(x) := \frac{- \sqrt{2 \mu }\ubarinf}{\sqrt{(1-\mu\tau)}}\mathrm{sech}(\sqrt{\mu(1-\mu\tau)}x \pm K_1)\tanh(\sqrt{\mu(1-\mu\tau)}x \pm\ K_1),
\end{align}
\end{subequations}
where we have imposed the condition $\qt_0(0) = u^{+}(0)$. 
In Figure \ref{fig:solitary_waves}, we depict the solitary wave solutions
\eqref{eq:explicit_solitary_NLSH} to the NLSH system (for different values of $\tau$)
and their counterpart solutions \eqref{eq:explicit_solitary_NLS} of the NLS equation.
Proceeding as in the previous section, the following proposition can be proven.
\begin{proposition}
    \label{prop:uniform_convergence_solitary}
    The family of solitary wave solutions $\{\qbp_0\}_{\tau>0}$ \eqref{eq:explicit_solitary_NLSH_q0} to the focusing NLSH system \eqref{eq:ODE_NLSH} converges uniformly,
    and linearly in $\tau$, to the 
    ground state solution $u^{+}$ \eqref{eq:explicit_solitary_NLS} of the focusing NLS equation as $\tau\to 0$.
    Moreover, $\{\qbp_1\}_{\tau>0}$ also converges uniformly and at a linear rate to $(u^{+})'$.
\end{proposition}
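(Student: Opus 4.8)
The plan is to follow the Mean Value Theorem argument of Proposition~\ref{prop:uniform_convergence front} almost verbatim, exploiting the fact that the bright profiles \eqref{eq:explicit_solitary_NLSH_q0} and \eqref{eq:explicit_solitary_NLS} are identical up to the replacement of the effective wavenumber $\sqrt{\mu}$ of the NLS ground state by $\sqrt{\mu(1-\mu\tau)} = \sqrt{\mu}\,\sqrt{1-\mu\tau}$; the amplitude $\sqrt{2}\ubarinf$ and the phase $K_1$ are unchanged, and $\ubarinf=\sqrt{\mu/\kappa}$ carries no $\tau$-dependence. Writing $g(s) := \sech(s\pm K_1)$, we have $\qbp_0^+(x) = \sqrt{2}\ubarinf\, g(\sqrt{\mu}\sqrt{1-\mu\tau}\,x)$ and $u^+(x) = \sqrt{2}\ubarinf\, g(\sqrt{\mu}\,x)$, so the comparison reduces to estimating the difference of a single fixed profile $g$ evaluated at two arguments differing by the factor $\sqrt{1-\mu\tau}$. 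Since $\mu\tau<1$ throughout the focusing regime, all arguments are real and $g$ is smooth, so there is no loss relative to the defocusing case.

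First I would apply the Mean Value Theorem to $g$ between $\sqrt{\mu}\sqrt{1-\mu\tau}\,x$ and $\sqrt{\mu}\,x$, obtaining $|\qbp_0^+(x)-u^+(x)| = \sqrt{2}\,|\ubarinf|\,|g'(\eta)|\,\sqrt{\mu}\,|x|\,|\sqrt{1-\mu\tau}-1|$ for an intermediate point $\eta$. Because $g'(s) = -\sech(s\pm K_1)\tanh(s\pm K_1)$ decays exponentially, the product $|x|\,|g'(\eta)|$ is bounded; making this bound uniform in both $x$ and $\tau$ is the one point requiring care, and I would handle it exactly as in Proposition~\ref{prop:uniform_convergence front}, by fixing an upper bound $\tau_0$ on $\tau$, noting that $\eta$ lies in an interval whose endpoints equal $\sqrt{\mu}\,|x|$ times a factor in the $x$-independent compact set $[\sqrt{1-\mu\tau_0},1]$, and concluding that $|x|\,|g'(\eta)|\le L_1<\infty$ uniformly. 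The Taylor expansion $\sqrt{1-\mu\tau} = 1 - \tfrac{\mu\tau}{2} + \order(\mu^2\tau^2)$ then turns $|\sqrt{1-\mu\tau}-1|$ into a quantity of size $\order(\tau)$, yielding $\sup_{x}|\qbp_0^+(x)-u^+(x)| = \order(\tau)$, i.e. uniform, linear-in-$\tau$ convergence.

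For the derivative component the argument is the same in spirit but has one extra wrinkle: besides the wavenumber change, $\qbp_1^+$ \eqref{eq:explicit_solitary_NLSH_q1} carries the $\tau$-dependent prefactor $(1-\mu\tau)^{-1/2}$, whereas $(u^+)'(x) = -\sqrt{2\mu}\,\ubarinf\,\sech(\sqrt{\mu}x\pm K_1)\tanh(\sqrt{\mu}x\pm K_1)$ does not. Writing $h(s) := \sech(s\pm K_1)\tanh(s\pm K_1)$, so that $\qbp_1^+(x) = -\sqrt{2\mu}\,\ubarinf\,(1-\mu\tau)^{-1/2}\, h(\sqrt{\mu}\sqrt{1-\mu\tau}\,x)$ and $(u^+)'(x) = -\sqrt{2\mu}\,\ubarinf\, h(\sqrt{\mu}\,x)$, I would add and subtract $h(\sqrt{\mu}\sqrt{1-\mu\tau}\,x)$ and split by the triangle inequality into (i) a prefactor term, controlled using $|(1-\mu\tau)^{-1/2}-1| = \order(\tau)$ together with the uniform boundedness of $h$, and (ii) an argument term, handled by the Mean Value Theorem applied to $h$. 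Since $h'(s) = \sech(s\pm K_1)\bigl(2\sech^2(s\pm K_1)-1\bigr)$ again decays exponentially, the product $|x|\,|h'(\eta)|$ is bounded uniformly by the same device as above, so term (ii) is also $\order(\tau)$.

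The main obstacle, exactly as in the front case, is not the algebra but establishing uniformity in $x$: the raw Mean Value Theorem bound contains the linearly growing factor $|x|$, and one must argue that it is dominated by the exponential decay of $g'$ (respectively $h'$) uniformly over the whole family $\{\tau\in(0,\tau_0)\}$. Once the $x$-independent compact interval of normalized rates from Proposition~\ref{prop:uniform_convergence front} is in place, the remaining work is a routine Taylor estimate, and the linear rate follows from the single expansions of $\sqrt{1-\mu\tau}$ and $(1-\mu\tau)^{-1/2}$ about $\tau=0$.
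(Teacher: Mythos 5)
Your proof is correct and is precisely the argument the paper intends: the paper gives no separate proof of Proposition~\ref{prop:uniform_convergence_solitary}, stating only that one proceeds as in Proposition~\ref{prop:uniform_convergence front}, and your Mean Value Theorem argument --- reducing the comparison to a single fixed $\sech$ profile evaluated at arguments differing by the factor $\sqrt{1-\mu\tau}$, bounding $|x|\,|g'(\eta)|$ uniformly via the $x$-independent compact set of normalized rates, and extracting the linear rate from the Taylor expansion of $\sqrt{1-\mu\tau}$ --- is a faithful transcription of that template. Your explicit triangle-inequality treatment of the extra prefactor $(1-\mu\tau)^{-1/2}$ in the $q_1$ component is a detail the paper leaves implicit, and you handle it correctly.
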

\begin{figure}
    \centering
    \includegraphics[width=0.85\textwidth]{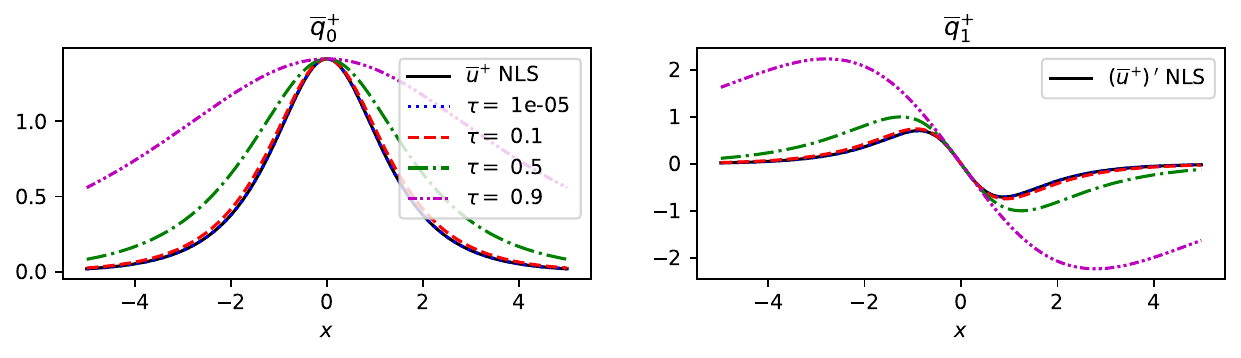}
    \caption{Solitary wave solutions to the  NLSH
     system \eqref{eq:explicit_solitary_NLSH_q0}  and the NLS equation \eqref{eq:explicit_solitary_NLS} with $\kappa=\mu=1$, $K_1=0$, and different values of $\tau$}
    \label{fig:solitary_waves}
\end{figure}

\section{Asymptotic-Preserving and Energy-Conserving Numerical Discretization} \label{sec:discretization} 
In this section, we propose a structure-preserving discretization for the NLSH
system~\eqref{nlsH}.  By structure preserving, we mean here that:
\begin{itemize}
    \item the scheme conserves a discrete analog of the mass, and
    \item the scheme is asymptotic preserving, as defined below.
\end{itemize}
Mass conservation is obtained by choosing a mass-conserving spatial
discretization (described in Section \ref{sec:space-disc}) and then using relaxation
in time (as described in Section \ref{sec:time-relaxation}).  The asymptotic preserving
property is proved for the continuous-space, discrete-time scheme,
using ImEx Runge-Kutta discretization as described in Sections \ref{sec:imex}-\ref{sec:AP}.

\subsection{Mass-Conserving Spatial Semidiscretization} \label{sec:space-disc}
We introduce a uniform spatial grid $x_1, \dots, x_j$ and discrete in space
approximation vectors $Q_0, Q_1$ with $(Q_0)_j(t) \approx q_0(x_j,t)$ and
$(Q_1)_j(t)\approx q_1(x_j,t)$.
A natural semi-discretization of \eqref{nlsH} takes the form
\begin{subequations} \label{semidisc}
\begin{align}
    \partial_t Q_0 & = i D Q_1 + i \kappa L Q_0 \\
    \tau \partial_t Q_1 & = iQ_1 - i D Q_0,
\end{align}
\end{subequations}
where $L$ is the diagonal matrix with entries $\ell_{jj}=|(Q_0)_j|^2$
and $D$ is a matrix that approximates the first derivative operator
(with appropriate boundary conditions).

\begin{theorem}
If $D$ is skew-hermitian, then solutions of \eqref{semidisc} are mass conservative:
\begin{align} \label{discrete-mass-conservation}
    \frac{d}{dt} \left( \|Q_0\|^2 + \tau\|Q_1\|^2 \right) = 0.
\end{align}
\end{theorem}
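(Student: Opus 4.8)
The plan is to differentiate the discrete energy $\|Q_0\|^2 + \tau\|Q_1\|^2$ directly and show that every surviving term either vanishes on its own or cancels against a partner. Writing $\|V\|^2 = V^* V$ for the Euclidean inner product, with $V^*$ the conjugate transpose, the product rule gives the identity $\frac{d}{dt}(V^*V) = 2\,\mathrm{Re}(V^* \partial_t V)$ for any time-dependent complex vector $V$, so that
\[
\frac{d}{dt}\left(\|Q_0\|^2 + \tau\|Q_1\|^2\right) = 2\,\mathrm{Re}\!\left(Q_0^* \partial_t Q_0\right) + 2\,\mathrm{Re}\!\left(Q_1^* \tau\partial_t Q_1\right).
\]

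Next I would substitute the semidiscrete equations \eqref{semidisc}. From the second equation, $Q_1^* \tau\partial_t Q_1 = i Q_1^* Q_1 - i Q_1^* D Q_0$, and the first piece $iQ_1^*Q_1 = i\|Q_1\|^2$ is purely imaginary, so it drops out under $\mathrm{Re}$. From the first equation, $Q_0^* \partial_t Q_0 = iQ_0^* D Q_1 + i\kappa Q_0^* L Q_0$, and here the nonlinear contribution $i\kappa Q_0^* L Q_0 = i\kappa \sum_j |(Q_0)_j|^4$ is $i$ times a real number, hence also purely imaginary and annihilated by $\mathrm{Re}$. This is precisely the step that exploits the diagonal, real structure of $L$ (with entries $\ell_{jj}=|(Q_0)_j|^2$). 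What remains after these cancellations is $2\,\mathrm{Re}(iQ_0^* D Q_1) - 2\,\mathrm{Re}(iQ_1^* D Q_0)$.

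The key step is to show that these two cross terms cancel, which is exactly where skew-hermiticity of $D$ enters. Setting $w := Q_1^* D Q_0$, the relation $D^* = -D$ gives $\overline{w} = (Q_1^* D Q_0)^* = Q_0^* D^* Q_1 = -Q_0^* D Q_1$, hence $Q_0^* D Q_1 = -\overline{w}$. Writing $w = a + bi$, a short computation shows $\mathrm{Re}(iQ_0^* D Q_1) = \mathrm{Re}(-i\overline{w}) = -b = \mathrm{Re}(iw) = \mathrm{Re}(iQ_1^* D Q_0)$, so the two contributions are equal and their difference is zero. This establishes \eqref{discrete-mass-conservation}.

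I expect no serious obstacle; this is a standard discrete energy estimate, and its only delicate point is the bookkeeping of real and imaginary parts in the cross terms, making sure the factors of $i$ combine with $D^* = -D$ so that the derivative terms cancel rather than reinforce. Note that the boundary conditions do not appear explicitly: they are absorbed into the hypothesis that $D$ is skew-hermitian, which is the discrete analog of integration by parts with vanishing boundary contribution.
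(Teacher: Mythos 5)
Your proof is correct and follows essentially the same direct computation as the paper: differentiate the discrete energy, substitute the semidiscrete equations, observe that the $i\|Q_1\|^2$ and $i\kappa Q_0^* L Q_0$ contributions are purely imaginary, and use $D^*=-D$ to cancel the two cross terms. The only cosmetic difference is that you package the conjugate-pair bookkeeping via $2\,\mathrm{Re}(\cdot)$ while the paper writes out $V^*\partial_t V + (\partial_t V^*)V$ explicitly.
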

\begin{proof}
The proof is by direct calculation.  We have (noting that $L=L^*$)
\begin{align*}
    \frac{d}{dt} \|Q_0\|^2 & = Q_0^* \partial_t Q_0 + (\partial_t Q_0^*)Q_0 \\
        & = i Q_0^* D Q_1 + i \kappa Q_0^* L Q_0 - i Q_1^* D^* Q_0 - i \kappa Q_0^* L Q_0 \\
        & = i Q_0^* D Q_1 - i Q_1^* D^* Q_0
\end{align*}
and
\begin{align*}
    \tau \frac{d}{dt} \|Q_1\|^2 & = \tau\left(Q_1^* \partial_t Q_1 + (\partial_t Q_1^*)Q_1\right) \\
        & = i \|Q_1\|^2 - i Q_1^* D Q_0 - i \|Q_1\|^2 + i Q_0^* D^* Q_1 \\
        & = - i Q_1^* D Q_0 + i Q_0^* D^* Q_1.
\end{align*}
Thus \eqref{discrete-mass-conservation} holds if $D^* = -D$.
\end{proof}
In Section \ref{sec:tests} we focus on problems with periodic boundary conditions
and take $D$ to be the Fourier spectral differentiation matrix.

\subsection{ImEx Runge-Kutta Time Discretization} \label{sec:imex}

An Implicit-Explicit (ImEx) RK method applied to the ODE system
\begin{equation}
    Q_t = f(Q) + g(Q)
\end{equation}
takes the form
\begin{subequations}
\label{Eq:ImExRK_Stage}
\begin{align}
\label{Eq:ImExRK_Stage_a}
    Q^{(i)} &= Q^n + \Delta t \left( \sum_{j=1}^{i-1} \tilde{a}_{ij} f(Q^{(j)}) + \sum_{j=1}^{i} a_{ij} g(Q^{(j)}) \right), \quad i = 1, 2, \dots, s,  \\
\label{Eq:ImExRK_Stage_b}
    Q^{n+1} &= Q^n + \Delta t \left( \sum_{j=1}^{s} \tilde{b}_j f(Q^{(j)}) + \sum_{j=1}^{s} b_j g(Q^{(j)}) \right),
\end{align}
\end{subequations}
where $Q^{n}$ denotes a numerical approximation of $Q$ at time $t_n$. This method is conveniently represented using the Butcher tableau:
\[
\begin{array}{c|c c}
    \tilde{c} & \tilde{A} \\
    \hline
    & \tilde{b}^T
\end{array}
\quad
\begin{array}{c|c c}
    c & A \\
    \hline
    & b^T
\end{array},
\]
where the strictly lower-triangular matrix $\tilde{A} = (\tilde{a}_{ij}) \in \mathbb{R}^{s \times s}$
corresponds to the explicit part, the lower-triangular matrix $A = (a_{ij}) \in \mathbb{R}^{s \times s}$
represents the implicit part, and the vectors $\tilde{c}, \tilde{b}, c,$ and
$b$ belong to $\mathbb{R}^s$. 

In order to apply such a method to the NLSH system \eqref{nlsH}, we must choose an
additive decomposition (or \emph{splitting} of the terms into the functions $f$
and $g$.  Ideally, the splitting should satisfy the following properties:
\begin{itemize}
    \item The implicit part $g(Q)$ allows for an efficient solution of the stage equations \eqref{Eq:ImExRK_Stage};
    \item the stable step size is independent of $\tau$
    \item the resulting discretization tends to a consistent discretization of the NLS equation as $\tau \to 0$ (i.e.,
        the \emph{asymptotic preserving property}, discussed below.
\end{itemize}
A promising splitting is obtained by handling the cubic terms explicitly and all other
(linear) terms implicitly:
\begin{equation}
\label{eqn: splitting}
\underbrace{\begin{pmatrix} \qzero \\ \qone \end{pmatrix}_t}_{= Q_t} = 
 \underbrace{\begin{pmatrix}  i \kappa \left|\qzero\right|^2 \qzero \\ 0 \end{pmatrix}}_{= f(Q)} + \underbrace{\begin{pmatrix} i\dqone \\  -i \tau^{-1}(\dqzero - \qone) \end{pmatrix}}_{= g(Q)}.
\end{equation}

We consider two types of ImEx-RK schemes: Type I (also known as Type A) and Type II (also known as Type CK).
The ImEx-RK method is classified as Type I if the matrix $A \in \mathbb{R}^{s \times s}$ is invertible. Type II ImEx-RK schemes are characterized by the following Butcher tableau:
\begin{align}
\label{GSA tableau}
\begin{array}{c|c c}
0 & 0 \\
\bm{\hat{\tilde{c}}} & \bm{\hat{\tilde{a}}} & \hat{\tilde{A}} \\
\hline
& \hat{\tilde{b}}_1 & {\pmb{\hat{\tilde{b}}}^T}
\end{array} \quad \begin{array}{c|c c}
0 & 0 \\
\bm{\hat{c}} & \bm{\hat{a}} & \hat{A} \\
\hline
& \hat{b}_1 & \bm{\hat{b}}^T
\end{array}.
\end{align}
Here, \( \bm{\hat{\tilde{c}}}, \bm{\hat{\tilde{a}}}, \bm{\hat{\tilde{b}}}, \bm{\hat{c}}, \bm{\hat{a}}, \) and \( \bm{\hat{b}} \) are vectors in \( \mathbb{R}^{s-1} \), \( \hat{\tilde{b}}_1 \) and \( \hat{b}_1 \) are constants in \( \mathbb{R} \), and \( \hat{\tilde{A}} \) and \(\hat{ A} \) are matrices in \( \mathbb{R}^{(s-1) \times (s-1)} \), where \( \hat{\tilde{A}} \) assumed to be invertible, indicating a diagonally implicit Runge-Kutta (DIRK) structure \cite{boscarino2024}.
An ImEx-RK method is said to be \textit{globally stiffly accurate} (GSA) if
\[
\tilde{a}_{si} = \tilde{b}_i \quad \text{and} \quad a_{si} = b_i, \quad i = 1, 2, \ldots, s.
\]

Furthermore, the initial data for NLSH \eqref{nlsH} is considered to be \textit{well-prepared} if
\( \qzero(x,0) = u \) and \( \qone(x,0) = D u \). 
In this section, where the solutions are treated as continuous in space, \( D \) represents the spatial derivative \( \partial_x \). 
In a fully discrete setting, \( D \) should instead be interpreted as the discrete differentiation operator used in the numerical scheme.

\subsection{Asymptotic-preserving time discretization} \label{sec:AP}
In the study of hyperbolic relaxation systems like the NLSH system \eqref{nlsH}, asymptotic-preserving (AP) schemes \cite{boscarino2024implicit,boscarino2024, shijin2001AP} provide a robust framework for handling stiff relaxation terms. These schemes ensure numerical stability and consistency across different stiffness regimes, allowing the numerical solution to naturally transition to the limiting problem as the relaxation parameter $\tau \to 0$. The NLSH system consists of convective and algebraic terms that depend on $\tau$, leading to extreme stiffness in the small $\tau$ limit. To mitigate stability constraints, stiff terms are integrated implicitly while non-stiff convection terms are treated explicitly. This implicit-explicit strategy enables stable and efficient numerical integration across multiple time scales. The key objective is to guarantee that as $\tau \to 0$, the numerical discretization naturally transitions to a stable and consistent discretization of the limiting problem, which, in this case, reduces to the NLS equation. We denote the continuous NLSH system as $P^\tau$ and its limiting problem as $P^0$. In developing AP schemes, we first focus on time discretization, deferring spatial discretization to later stages.

To formalize the AP property, let $P_h^\tau$ represent the numerical discretization of $P^\tau$, where $h$ denotes the discretization parameters, i.e. $h = (\Delta t, \Delta x)$. The AP property guarantees that for fixed $h$, the scheme $P_h^\tau$ provides, in the limit $\tau \to 0$, a consistent discretization of the limit problem $P^0$, denoted as $P_h^0$. This relationship is illustrated in Figure \ref{fig:AP}. The AP property can be defined more formally as follows:
\begin{definition}
A numerical discretization $P_h^\tau$ is said to be AP if its limiting discretization $P_h^0 = \lim_{\tau \to 0} P_h^\tau$ is a consistent and stable discretization of the continuous limit model $P^0$.
\end{definition}
\begin{figure}
    \centering
    \includegraphics[width= 6 cm]{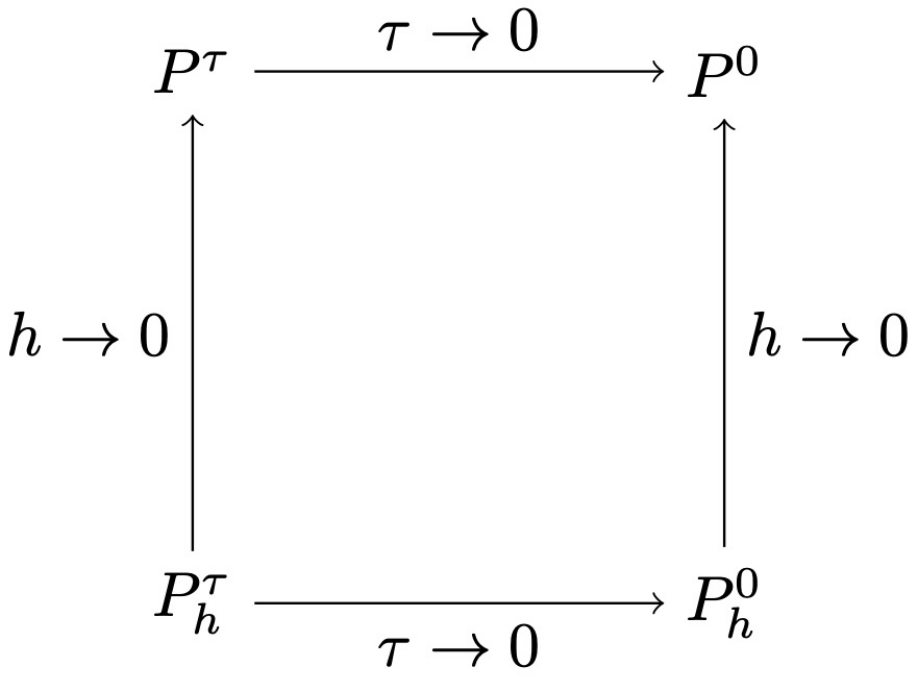}
    \caption{A schematic illustration of the AP property \cite{biswas2024kdvh}.}
    \label{fig:AP}
\end{figure}
An even strong property is that of \emph{asymptotic accuracy}:
\begin{definition}
A numerical discretization $P_h^\tau$ is said to be \emph{asymptotically accurate} (AA) if the limiting discretization $P_h^0 = \lim_{\tau \to 0} P_h^\tau$ maintains the temporal order of accuracy of the original scheme.
\end{definition}
\color{black}
Next, we define the AP property for the two components in our NLSH system, denoted by $Q^{n} = [q_0^n, q_1^n]^{T}$, which represents the numerical 
approximation of $Q$ at time $t_n$.
\begin{definition} An ImEx-RK method \eqref{Eq:ImExRK_Stage} applied to the
splitting \eqref{eqn: splitting} of the NLSH system is AP with respect to $\qzero$ if $\qzero^{n+1} \to u^{n+1}$ for $\tau
\to 0$, where $u^{n+1}$ is the numerical solution of
the NLS equation with splitting
\begin{equation}
\label{NLS_splitting} 
   u_t = \underbrace{
       i \kappa \left|u\right|^2 u }_{= f(u)} + \underbrace{
        i u_{xx} }_{= g(u)}
\end{equation}
and the same ImEx-RK method is AP for the auxiliary component $\qone$ if $\qone^{n+1} \to u^{n+1}_x$ for $\tau \to 0$.
\end{definition}

We now present the following results regarding the AP and AA properties of type I methods:

\begin{theorem} \label{thm:typeI}
An ImEx-RK method of type I applied to the splitting \eqref{eqn: splitting} of
the hyperbolic approximation of the NLS equation is always AP with respect to $\qzero$. For such a method, in the stiff limit $\tau \to 0$, we have
\begin{align}
    \qzero^{n+1} - u(t_{n+1}) &= \mathcal{O}(\Delta t^p), 
\end{align}
where $p$ is the order of the ImEx-RK method. Furthermore, if the method is assumed to be globally stiffly accurate, it is also AP for the auxiliary component $\qone$. In the stiff limit $\tau \to 0$, we have
\begin{equation}
    \qone^{n+1} - u_x(t_{n+1}) = \mathcal{O}(\Delta t^p).
\end{equation}
\end{theorem}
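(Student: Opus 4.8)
The plan is to pass to the stiff limit $\tau\to0$ directly inside the stage equations \eqref{Eq:ImExRK_Stage_a}, exploiting the invertibility of the implicit matrix $A$ (the defining property of a Type I method) to force the auxiliary variable onto the manifold $q_1=\partial_x q_0$. Writing the internal stages as $Q^{(i)}=[q_0^{(i)},q_1^{(i)}]^T$ and introducing the residual $r^{(i)}:=\partial_x q_0^{(i)}-q_1^{(i)}$, the two components of \eqref{Eq:ImExRK_Stage_a} for the splitting \eqref{eqn: splitting} read
\begin{align*}
  q_0^{(i)} &= q_0^n + \Delta t\Big(i\kappa\sum_{j<i}\tilde a_{ij}|q_0^{(j)}|^2 q_0^{(j)} + i\sum_{j\le i}a_{ij}\,\partial_x q_1^{(j)}\Big),\\
  q_1^{(i)} &= q_1^n - \frac{i\Delta t}{\tau}\sum_{j\le i}a_{ij}\,r^{(j)}.
\end{align*}
Stacking the stage values into vectors and setting $\alpha=i\Delta t/\tau$, I would rewrite the second line as the linear system $(I-\alpha A)\mathbf{q}_1=q_1^n\mathbf 1-\alpha A\,\partial_x\mathbf{q}_0$, and analyze its solution through the expansion $(I-\alpha A)^{-1}=-\alpha^{-1}A^{-1}+\mathcal O(\alpha^{-2})$ valid as $\alpha\to\infty$.

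Since $A$ is invertible this expansion is legitimate and yields $\mathbf{q}_1=\partial_x\mathbf{q}_0+\mathcal O(\tau)$, equivalently $\mathbf r=(I-\alpha A)^{-1}(\partial_x\mathbf{q}_0-q_1^n\mathbf 1)=\mathcal O(\tau)$; in particular the stage values $q_1^{(j)}$ stay bounded and converge to $\partial_x q_0^{(j)}$ \emph{without} any well-preparedness assumption on $q_1^n$, because the dependence on $q_1^n$ enters only at order $\alpha^{-1}$. Substituting $\partial_x q_1^{(j)}=\partial_{xx}q_0^{(j)}-\partial_x r^{(j)}=\partial_{xx}q_0^{(j)}+\mathcal O(\tau)$ into the first line shows that the $q_0$ stages converge to those of the ImEx-RK method applied to the NLS splitting \eqref{NLS_splitting}; invertibility of $A$ again guarantees the limiting (now linear) implicit solve is well posed, so $q_0^{(j)}\to u^{(j)}$ and hence $q_0^{n+1}\to u^{n+1}$. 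This establishes the AP property for $q_0$, and since the limiting scheme is exactly the order-$p$ ImEx-RK discretization of NLS, its error gives $q_0^{n+1}-u(t_{n+1})=\mathcal O(\Delta t^{p})$ in the stiff limit.

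For the auxiliary component I would invoke the globally stiffly accurate condition $b_j=a_{sj}$, i.e. $\mathbf b^T=\mathbf e_s^T A$, so that $\mathbf b^TA^{-1}=\mathbf e_s^T$. Evaluating the update \eqref{Eq:ImExRK_Stage_b} for $q_1$ requires the limit of the $\mathcal O(1)$ quantity $-i\tau^{-1}r^{(j)}$; using $-i\tau^{-1}=-\alpha/\Delta t$ together with $-\alpha(I-\alpha A)^{-1}\to A^{-1}$, the weighted sum collapses to $q_1^{n+1}\to q_1^n+\mathbf e_s^T(\partial_x\mathbf{q}_0-q_1^n\mathbf 1)=\partial_x q_0^{(s)}$. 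Equivalently, GSA means the last stage is the step, $Q^{(s)}=Q^{n+1}$, so that $q_1^{n+1}=q_1^{(s)}\to\partial_x q_0^{(s)}=\partial_x q_0^{n+1}\to u_x^{n+1}$; applying $\partial_x$ to the $q_0$ error estimate then yields $q_1^{n+1}-u_x(t_{n+1})=\mathcal O(\Delta t^p)$. This is exactly where GSA is indispensable: for a non-GSA method $\mathbf b^TA^{-1}$ need not equal $\mathbf e_s^T$, so the $q_1$ update retains an $\mathcal O(1)$ memory of $q_1^n$ and an initial layer may persist.

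I expect the main obstacle to be making this coupled, nonlinear limit rigorous rather than merely formal: one must justify the simultaneous convergence of the explicit cubic contributions (which couple the $q_0$ stages across $j<i$) and the implicit $q_1$ stages, and control the $(I-\alpha A)^{-1}$ expansion uniformly — including the spatial derivative $\partial_x r^{(j)}$ of the residual — so that the limiting stage system is genuinely the NLS ImEx system and the order-$p$ accuracy transfers. A clean induction over the stage index $i$, combined with continuity of the map $q\mapsto|q|^2q$, should organize this argument.
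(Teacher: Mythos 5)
Your proposal is correct and follows essentially the same route as the paper's proof: both exploit the invertibility of $A$ to force the $\qone$ stages onto the manifold $\qone=\partial_x\qzero$ in the stiff limit, identify the limiting $\qzero$ scheme with the same ImEx-RK method applied to the NLS splitting \eqref{NLS_splitting} (whence the $\mathcal{O}(\Delta t^p)$ estimate), and then use the stiff-accuracy identity $\bm{b}^TA^{-1}=[0,\dots,0,1]$ to conclude $\qone^{n+1}=\qone^{(s)}\to\partial_x\qzero^{n+1}$. The only difference is presentational — the paper organizes the limit via formal Hilbert expansions in powers of $\tau$ and matches the $\mathcal{O}(\tau^{-1})$ and $\mathcal{O}(\tau^{0})$ terms, whereas you solve the $\qone$ stage system explicitly and expand $(I-\alpha A)^{-1}$; both arguments are formal at the same level and rest on the same two facts.
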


To demonstrate the AP property for a general ImEx-RK method of type II, an additional assumption concerning the well-preparedness of the initial data is necessary. 
According to the formulation in \cite{boscarino2024}, the initial data is said to be \textit{well-prepared} if it satisfies:
\begin{align}
\label{thm: well-prepared}
\qzero^0 = u^0 + \mathcal{O}(\tau) \quad \text{and} \quad \qone^0 = u_{x}^0 + \mathcal{O}(\tau).
\end{align}
For a general ImEx-RK method of type II, we prove the following result:
\begin{theorem}
\label{thm:typeII}
A globally stiffly accurate ImEx-RK method of type II, applied to the splitting \eqref{eqn: splitting} of the hyperbolic approximation of the NLS equation, together with the well-prepared initial data \eqref{thm: well-prepared}, is AP for both $\qzero$ and $\qone$. Furthermore, in the stiff limit $\tau \to 0$, the following error estimates apply to all components:
\[
\qzero^{n+1} - u(t_{n+1}) = \mathcal{O}(\Delta t^p), \quad 
\qone^{n+1} - u_x(t_{n+1}) = \mathcal{O}(\Delta t^p),
\]
\textit{where $p$ is the order of the ImEx-RK method.}
\end{theorem}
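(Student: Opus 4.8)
The plan is to run the ImEx stages componentwise and track the \emph{relaxation residual} $R^{(i)} := \partial_x q_0^{(i)} - q_1^{(i)}$, together with $R^n := \partial_x q_0^n - q_1^n$; the whole argument amounts to showing that well-prepared data forces every $R^{(i)}$ to be $\mathcal{O}(\tau)$, so that in the limit the $q_0$-stages collapse exactly onto the ImEx stages for NLS under the splitting \eqref{NLS_splitting}. I would proceed by induction on $n$, assuming $R^n = \mathcal{O}(\tau)$ (which holds at $n=0$ by \eqref{thm: well-prepared}) and proving the two error estimates together with $R^{n+1} = \mathcal{O}(\tau)$, so that the induction closes and well-preparedness is maintained step to step.

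First I would write out the two rows of the stage equations \eqref{Eq:ImExRK_Stage_a} for the splitting \eqref{eqn: splitting}. The second component has no explicit part, so $q_1^{(i)} = q_1^n - (i\Delta t/\tau)\sum_{j\le i} a_{ij} R^{(j)}$; substituting this together with the $q_0$-stage into $R^{(i)} = \partial_x q_0^{(i)} - q_1^{(i)}$ and isolating the diagonal term yields a recursion of the schematic form
\begin{equation*}
    R^{(i)}\Bigl(1 - \tfrac{i\Delta t\,a_{ii}}{\tau}\Bigr) = R^n + \Delta t\,\Phi^{(i)} + \tfrac{i\Delta t}{\tau}\sum_{j<i} a_{ij} R^{(j)},
\end{equation*}
where $\Phi^{(i)}$ collects the contributions of the cubic term and of $\partial_{xx} q_1^{(j)}$, and is bounded uniformly in $\tau$ once the earlier residuals are $\mathcal{O}(\tau)$ (since then $q_1^{(j)} = \partial_x q_0^{(j)} - R^{(j)}$ is bounded). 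For a Type~II tableau the first stage is explicit ($a_{11}=0$), so $q_0^{(1)}=q_0^n$, $q_1^{(1)}=q_1^n$ and hence $R^{(1)}=R^n=\mathcal{O}(\tau)$: this is exactly where well-preparedness must be invoked, because — unlike the Type~I case of Theorem \ref{thm:typeI} — the singular implicit matrix does not relax the first-stage residual by itself. For $i\ge 2$ we have $a_{ii}\ne 0$, so the prefactor $\tau/(\tau - i\Delta t\,a_{ii})$ multiplying the data is $\mathcal{O}(\tau)$ while $i\Delta t/(\tau - i\Delta t\,a_{ii}) \to -1/a_{ii}$; an induction over the stages (using invertibility of the DIRK block $\hat A$) then gives $R^{(i)} = \mathcal{O}(\tau)$ for every stage.

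Next I would substitute $q_1^{(j)} = \partial_x q_0^{(j)} - R^{(j)}$ into the first row, so that $i\Delta t\,a_{ij}\,\partial_x q_1^{(j)} = i\Delta t\,a_{ij}\,\partial_{xx} q_0^{(j)} + \mathcal{O}(\tau)$; as $\tau\to 0$ the $q_0$-stage equations reduce precisely to the ImEx stage equations for $u_t = i\kappa|u|^2u + i u_{xx}$ with the splitting \eqref{NLS_splitting}. Invoking GSA ($a_{si}=b_i$, $\tilde a_{si}=\tilde b_i$) identifies the update with the last stage, $q_0^{n+1}=q_0^{(s)}$ and $q_1^{n+1}=q_1^{(s)}$; starting from $q_0^n = u^n + \mathcal{O}(\tau)$ this gives $q_0^{n+1} \to u^{n+1}$, and, because stage $s$ is fully implicit with $R^{(s)} = \mathcal{O}(\tau)$, also $q_1^{n+1} = \partial_x q_0^{(s)} - R^{(s)} \to u^{n+1}_x$, where $u^{n+1}$ is the numerical NLS solution. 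This is the AP property for both components, and it simultaneously yields $R^{n+1}=\mathcal{O}(\tau)$, closing the induction. The error estimates then follow by combining this limit identity with the order-$p$ accuracy of the same ImEx-RK scheme applied to \eqref{NLS_splitting}, i.e. $u^{n+1} - u(t_{n+1}) = \mathcal{O}(\Delta t^p)$.

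I expect the first step to be the main obstacle. Because the Type~II implicit matrix is singular, the first-stage residual is not damped by the scheme and its smallness must be supplied entirely by the well-prepared data; one must then verify that this $\mathcal{O}(\tau)$ bound propagates cleanly through the coupled stage solves — where the implicit update mixes $q_0^{(i)}$ and $q_1^{(i)}$ through $\partial_x$ — without amplification, which is precisely what bounding $\Phi^{(i)}$ uniformly in $\tau$ accomplishes. The complementary subtlety is confirming that GSA is what places $q_1^{n+1}$ back on the equilibrium manifold $q_1 = \partial_x q_0$: without it the update $q_1^{n+1} = q_1^n - (i\Delta t/\tau)\sum_j b_j R^{(j)}$ need not have the combination $\sum_j b_j R^{(j)} = \mathcal{O}(\tau)$ that keeps the limit finite and equal to $u^{n+1}_x$.
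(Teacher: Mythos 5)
Your proposal is correct and follows essentially the same route as the paper's proof: well-preparedness supplies the equilibrium relation at the explicit first stage, invertibility of the DIRK block $\hat{A}$ propagates it through the remaining stages (your residual recursion $R^{(i)}(1 - i\Delta t\,a_{ii}/\tau) = \cdots$ is just the paper's $\mathcal{O}(\tau^{-1})$ Hilbert-expansion balance written with explicit $\mathcal{O}(\tau)$ bookkeeping), the GSA condition identifies $\qone^{n+1}$ with $\qone^{(s)}$ so that the auxiliary variable lands back on the manifold $\qone = \partial_x\qzero$, and induction on $n$ maintains well-preparedness. The only cosmetic difference is that the paper works with formal power-series (Hilbert) expansions and matches orders of $\tau$, whereas you track a single residual $R = \partial_x q_0 - q_1$; the logical content and the points where each hypothesis is invoked are identical.
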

\begin{proof}
{\bf(Theorem \ref{thm:typeI})}
Let us denote the vectors of stage-solution components for the variables \( \qzero \) and \( \qone \) as follows: 
\( \bm{\qzero} = [\qzero^{(1)}, \qzero^{(2)}, \ldots, \qzero^{(s)}]^T \) and \( \bm{\qone} = [\qone^{(1)}, \qone^{(2)}, \ldots, \qone^{(s)}]^T \), and let $\bm{e} = [1,1,\dots,1]^T$ be the vector of ones in $\mathbb{R}^s$. For the NLSH system with the splitting \eqref{eqn: splitting}, the equation \eqref{Eq:ImExRK_Stage_a} in component form becomes
\begin{subequations}
\begin{align}
\label{eqn: type1 q0 component}
\bm{\qzero} &= \qzero^n \bm{e} + \Delta t \left( \tilde{A}\left(i\kappa\bm{|\qzero|}^2\bm{\qzero} 
\right) + A(i\bm{\dqone}) \right), \\
\label{eqn: type1 q1 component}
\bm{\qone} &= \qone^n \bm{e} + \frac{\Delta t}{\tau} \left( -i A(\bm{\dqzero}-\bm{\qone})\right).
\end{align}
\end{subequations}
Similarly, the final update of the solution can be written in components as
\begin{subequations}
\begin{align}
\label{eqn: type1 q0 stage}
\qzero^{n+1} &= \qzero^n + \Delta t \left( \bm{\tilde{b}^T}\left(i\kappa|\bm{\qzero}|^2\bm{\qzero}\right) 
 + \bm{b^T}(i\bm{\dqone})\right), \\
\label{eqn: type1 q1 stage}
\qone^{n+1} &= \qone^n + \frac{\Delta t}{\tau} \left(-i \bm{b^T}(\bm{\dqzero}-\bm{\qone})\right).
\end{align}
\end{subequations}
We assume there exist Hilbert expansions for \( \qzero^n \) and \( \qone^n \):
\begin{align}
\label{eq: hilbert}
\qzero^n &= (\qzero^n)_0 + \tau (\qzero^n)_1 + \tau^2 (\qzero^n)_2 + \cdots , \quad \qone^n = (\qone^n)_0 + \tau (\qone^n)_1 + \tau^2 (\qone^n)_2 + \cdots ,
\end{align}
and for the stage vectors \( \bm{\qzero}\) and \( \bm{\qone} \):
\begin{align}
\label{eq: hilbert stage vectors}
\bm{\qzero} &= (\bm{\qzero})_0 + \tau (\bm{\qzero})_1 + \tau^2 (\bm{\qzero})_2 + \cdots, \quad \bm{\qone} = (\bm{\qone})_0 + \tau (\bm{\qone})_1 + \tau^2 (\bm{\qone})_2 + \cdots.
\end{align}
We substitute these expansions into the stage equations and equate the leading-order terms in powers of $\tau$. This process applied to the stage equations \eqref{eqn: type1 q1 component} results in:
\begin{equation}
    \mathcal{O}(\tau^{-1}): \quad
        A\left(\bm{\partial_x}\qzzv-\qozv\right) = 0 .
\end{equation}
Since $A$ is invertible, it follows that
\begin{equation}
\label{low order results}
   \qozv = \bm{\partial_x}\qzzv.
\end{equation}
The leading-order term in the expansion of \eqref{eqn: type1 q0 component} gives:
\begin{equation}
    \mathcal{O}(\tau^{0}): \quad
  \qzzv = (\qzero^n)_0 \bm{e} + \Delta t \left( i\kappa \tilde{A}|(\bm{\qzero})_0|^2\qzzv  + i A\bm{\partial_x}\qozv \right).
\end{equation}
Using \eqref{low order results} in the above equation we get 
\begin{equation}
\label{leading_order_terms}
    \qzzv = (\qzero^n)_0 \bm{e} + \Delta t \left( i\kappa \tilde{A}|(\bm{\qzero})_0|^2\qzzv  + i A\bm{\partial_{xx}}\qzzv \right).
\end{equation}
Using equation \eqref{low order results} again in the leading-order term of the solution update for the first equation yields:
\begin{equation}
\label{leading_order_stage_terms}
  (\qzero^{n+1})_0 = (\qzero^n)_0 + \Delta t \left( i\kappa\bm{\tilde{b}^T}|(\bm{\qzero})_0|^2\qzzv 
 + i \bm{b^T}\bm{\partial_{xx}}\qzzv \right).
\end{equation}
In the limit as $\tau \to 0$, the numerical scheme becomes:
\begin{align}
    \qzzv &= (\qzero^n)_0 \bm{e} + \Delta t \left( i\kappa \tilde{A}|(\bm{\qzero})_0|^2\qzzv  + i A \bm{\partial_{xx}} \qzzv\right),\\
  (\qzero^{n+1})_0 &= (\qzero^n)_0 + \Delta t \left( i\kappa\bm{\tilde{b}^T}|(\bm{\qzero})_0|^2\qzzv 
 + i \bm{b^T}\bm{\partial_{xx}}\qzzv \right).
\end{align}
This corresponds exactly to the numerical scheme derived from the time-stepping method \eqref{Eq:ImExRK_Stage} of type I when applied to the NLS equation \eqref{NLS_splitting}, where the term $f(u)$ is handled explicitly, while $g(u)$ is treated implicitly. Consequently, we also obtain: \begin{align}
    (\qzero^{n+1})_0 - u(t_{n+1}) &= \mathcal{O}(\Delta t^p),
\end{align} where $p$ is the order of the ImEx-RK method.
Furthermore, if the method is GSA, we can establish the AP property for the auxiliary variable $\qone$. To demonstrate this, we consider equation \eqref{eqn: type1 q1 component}. By utilizing the invertibility of $A$, we get:
\begin{align}
A^{-1}\left(\bm{\qone} - \qone^n \bm{e}\right)  &=  \frac{\Delta t}{\tau} \left( -i(\bm{\dqzero}-\bm{\qone})\right).
\end{align}
Substituting this expression into the update rule for the auxiliary component \eqref{eqn: type1 q1 stage}, we obtain:
\begin{align}
    \qone^{n+1} &= \qone^n + \bm{b}^T A^{-1} (\bm{\qone} - \qone^n \bm{e}) = \bm{b}^T A^{-1} \bm{\qone} + (1 - \bm{b}^T A^{-1} \bm{e}) \qone^n.
\end{align}
Since the method is assumed to be GSA, the stiff accuracy of \( A \) ensures that \( \bm{b}^T A^{-1} = [0, 0, \dots, 1] \) in \( \mathbb{R}^s \), which implies \( 1 - \bm{b}^T A^{-1} \bm{e} = 0 \). Using this property, we obtain:
\begin{align}
    \qone^{n+1} &= \bm{b}^T A^{-1} \bm{\qone} = \qone^{(s)}.
\end{align}
In the limit as \( \tau \to 0 \), we obtain $ (\qone^{n+1})_0 = (\qone^{(s)})_0$. Using equation \eqref{low order results}, 
we can express $(\qone^{n+1})_0 = \partial_{x}(\qzero^{(s)})_0$. By the GSA property of the scheme, 
it follows from equations \eqref{leading_order_terms} and \eqref{leading_order_stage_terms} that \( (\qzero^{(s)})_0 =(\qzero^{n+1})_0 \), leading to 
\( (\qone^{n+1})_0 = \partial _x( \qzero^{n+1})_0 \). This confirms the AP property of the type I ImEx-RK method for the auxiliary components. In the stiff limit \( \tau \to 0 \), the following error estimate holds for \( \qone \):
\begin{equation}
    \qone^{n+1} - u_x(t_{n+1}) = \mathcal{O}(\Delta t^p).
\end{equation}
\end{proof}

Next, we analyze the asymptotic-preserving (AP) property of the ARS(1,1,1) method when applied to the splitting in \eqref{eqn: splitting}. The ARS(1,1,1) method updates the solution from time step \( t^n \) to \( t^{n+1} \) as follows:
\begin{equation}
Q^{n+1} = Q^n + \Delta t \left(f \left(Q^n\right) + g\left(Q^{n+1}\right)\right) ,
\end{equation}
where the update components are given by:
\begin{subequations}
\label{eqn: discretized-sys}
\begin{align}
\label{eqn: q0}
\qzero^{n+1} &= \qzero^n + i \Delta t \left( \kappa|\qzero^n|^2\qzero^n + \dqone^{n+1} \right),  \\
\label{eqn: q1}
\qone^{n+1} &= \qone^n -i \frac{\Delta t}{\tau} \left(\dqzero^{n+1} - \qone^{n+1} \right).
\end{align}
\end{subequations}
To analyze the AP property, we insert Hilbert expansions \eqref{eq: hilbert} into the update equations \eqref{eqn: discretized-sys}. Analyzing the leading-order terms with respect to \( \tau \), we find from \eqref{eqn: q1} that
\begin{equation}
\label{eqn: leading-order-terms}
\mathcal{O} \left( \tau^{-1} \right) : \quad (\qone^{n+1})_0 = \partial_x\left( \qzero^{n+1} \right)_0 . 
\end{equation}
The order \( \mathcal{O}(\tau^0) \) terms in \eqref{eqn: q0} yield

\begin{align}
\mathcal{O} \left( \tau^0 \right) : \quad (\qzero^{n+1})_0 = (\qzero^n)_0 + i \Delta t \left( \kappa|(\qzero^n)_0|^2(\qzero^n)_0 + \partial_x(\qone^{n+1})_0 \right).
\end{align}
Replacing \( (\qone^{n+1})_0 \) with its value from \eqref{eqn: leading-order-terms}, we obtain

\begin{align}
 \frac{(\qzero^{n+1})_0 - (\qzero^n)_0 }{\Delta t} &=  i  \left( \kappa|(\qzero^n)_0|^2(\qzero^n)_0 + \partial_{xx}(\qzero^{n+1})_0 \right) .
\end{align}

In the limit \( \tau \to 0 \), these equations yield a discretization of the NLS equation using the ARS(1,1,1) method, thereby demonstrating the AP property of the ARS(1,1,1) method for the splitting \eqref{eqn: splitting}.

\begin{proof}
{\bf (Theorem \ref{thm:typeII})}
We now prove the AP property for the CK-type globally stiffly accurate (GSA) ARK methods applied to the splitting \eqref{eqn: splitting} of the NLS equation.
Starting from the solution \( Q^n \) at time \( t_n \), a CK-type SA ARK method applied to the system 
\[
Q_t = f(Q) + g(Q)
\]
computes the approximate solution at time \( t_{n+1} \) as follows:
\begin{subequations}
\begin{align}
Q^{(1)} &= Q^n, \\
Q^{(i)} &= Q^n + \Delta t \left( \hat{\tilde{a}}_{i1} f\left( Q^{n} \right) + \sum_{j=2}^{i-1} \hat{\tilde{a}}_{ij} f\left( Q^{(j)} \right) + \hat{a}_{i1} g\left( Q^{(n)} \right) + \sum_{j=2}^{i} \hat{a}_{ij} g\left( Q^{(j)} \right) \right), \quad i = 2,3,\ldots,s, \\
Q^{n+1} &= Q^n + \Delta t \left( \hat{\tilde{b}}_1 f\left( Q^{n} \right) + \sum_{j=2}^s \hat{\tilde{b}}_j f\left( Q^{(j)} \right) + \hat{b}_1 g\left( Q^{n} \right) + \sum_{j=2}^s \hat{b}_j g\left( Q^{(j)} \right) \right).
\end{align}
\end{subequations}
By introducing the notation \( \bm{Q} = [Q^{(2)}, Q^{(3)}, \ldots, Q^{(s)}]^T \), the identity matrix \( I \in \mathbb{R}^{2 \times 2} \), and the vector of ones \( \bm{e} \in \mathbb{R}^{s-1} \), we can express this compactly as follows:
\begin{subequations}
\begin{align}
\label{eqn: compact form}
\bm{Q} &= \bm{e} \otimes Q^n + \Delta t \left( \bm{\hat{\tilde{a}}} \otimes f(Q^n) + (\hat{\tilde{A}} \otimes I) f(\bm{Q}) + \bm{\hat{a}} \otimes g(Q^n) + (\hat{A} \otimes I) g(\bm{Q}) \right), \\
\label{eqn: compact update}
Q^{n+1} &= Q^n + \Delta t \left( \hat{\tilde{b}}_1 f(Q^n) + (\bm{\hat{\tilde{b}}}^T \otimes I) f(\bm{Q}) + \hat{b}_1 g(Q^n) + (\bm{\hat{b}}^T \otimes I) g(\bm{Q}) \right).
\end{align}
\end{subequations}
Let us denote the vectors of stage-solution components for the variables \( \qzero \) and \( \qone \) as follows: 
\\
\( \bm{\qzero} = [\qzero^{(2)}, \qzero^{(3)}, \ldots, \qzero^{(s)}]^T \) and \( \bm{\qone} = [\qone^{(2)}, \qone^{(3)}, \ldots, \qone^{(s)}]^T \). For the NLSH system with the splitting defined in \eqref{eqn: splitting}, the equation \eqref{eqn: compact form} in component form is expressed as:
\begin{subequations}
\begin{align}
\label{eqn: GSA q0 component}
\bm{\qzero} &= \qzero^n \bm{e} + i \Delta t \left( \kappa|\qzero^n|^2\qzero^n \bm{\hat{\tilde{a}}} + \kappa \hat{\tilde{A}}(|\bm{\qzero}|^2\bm{\qzero}) + \dqone^n \bm{\hat{a}} + \hat{A}(\bm{\dqone})\right), \\
\label{eqn: GSA q1 component}
\bm{\qone} &= \qone^n \bm{e} +i \frac{\Delta t}{\tau} \left(-(\dqzero^n - \qone^n) \bm{\hat{a}} - \hat{A}(\bm{\dqzero} - \bm{\qone})\right).
\end{align}
\end{subequations}
Similarly, the update of the solution from \eqref{eqn: compact update} can be expressed in component form as follows:
\begin{subequations}
\begin{align}
\qzero^{n+1} &= \qzero^n + i \Delta t \left( \kappa|\qzero^n|^2\qzero^n\hat{\tilde{b}}_1 + \kappa {\pmb{\hat{\tilde{b}}}^T}(|\bm{\qzero}|^2\bm{\qzero}) + \dqone^n \hat{b}_1 + \bm{\hat{b}}^T(\bm{\dqone}) \right), \\
\qone^{n+1} &= \qone^n + i \frac{\Delta t}{\tau} \left(-(\dqzero^n - \qone^n) \hat{b}_1  - \bm{\hat{b}^T}(\bm{\dqzero} - \bm{\qone})\right).
\end{align}
\end{subequations}
Next, we substitute the Hilbert expansions \eqref{eq: hilbert}, along with the expansions for the stage vectors \eqref{eq: hilbert stage vectors}, into the stage equations and equate the leading-order terms. Applying this to the stage equation \eqref{eqn: GSA q1 component} yields:
\begin{align}
\mathcal{O} \left( \tau^{-1} \right) : \quad (\partial_x(\qzero^n)_0 - (\qone^n)_0) \bm{\hat{a}} + \hat{A}(\bm{\partial_x}(\bm{\qzero})_0 -( \bm{\qone})_0) = \bm{0}.
\end{align}
By the well-preparedness of the initial data at time $t_n$, we have   \( (\qone^n)_0 = \partial_x(\qzero^n)_0 \), and given that \( \hat{A} \) is invertible, we can derive the following relationship:
\begin{align}
\label{q0 p0}
\bm{(\qone)}_0 = \bm{\partial_x}(\bm{\qzero})_0.
\end{align}
The leading-order terms in the expansion of \eqref{eqn: GSA q0 component} yield:

\begin{align}
\mathcal{O} \left( \tau^0 \right) : \quad (\bm{\qzero})_0 = (\qzero^n)_0 \bm{e} + i \Delta t \left( \kappa|(\qzero^n)_0|^2(\qzero^n)_0 \bm{\hat{\tilde{a}}} + \kappa \hat{\tilde{A}}|(\bm{\qzero})_0|^2(\bm{\qzero})_0 + \partial_x(\qone^n)_0 \bm{\hat{a}} + \hat{A}\bm{\partial_{x}}\qozv\right).
\end{align}

Substituting \( \bm{(\qone)}_0 = \bm{\partial_x}(\bm{\qzero})_0 \) into the equation above, and the well-preparedness of the initial data at time $t_n$, we obtain:

\begin{align}
\label{components update}
(\bm{\qzero})_0 = (\qzero^n)_0 \bm{e} + i \Delta t \left( \kappa|(\qzero^n)_0|^2(\qzero^n)_0 \bm{\hat{\tilde{a}}} + \kappa \hat{\tilde{A}}|(\bm{\qzero})_0|^2(\bm{\qzero})_0 + \partial_{xx}(\qzero^n)_0 \bm{\hat{a}} + \hat{A}\bm{\partial_{xx}}(\bm{\qzero})_0\right).
\end{align}

The solution updates for the first equation in the leading-order term can then be expressed as:

\begin{align}
\label{solutions update}
(\qzero^{n+1})_0 &= (\qzero^n)_0 + i \Delta t \left( \kappa|(\qzero^n)_0|^2(\qzero^n)_0\hat{\tilde{b}}_1 + \kappa {\pmb{\hat{\tilde{b}}}^T}(|(\bm{\qzero})_0|^2(\bm{\qzero})_0) + \partial_x(\qone^n)_0 \hat{b}_1 + \bm{\hat{b}}^T\bm{\partial_{xx}}(\bm{\qzero})_0\right).
\end{align}

As \(\tau \to 0\), we have \(\qzero^n = (\qzero^n)_0\) and \(\bm{\qzero} = \qzzv\). Therefore, the numerical scheme simplifies to:
\begin{subequations}
\begin{align}
\bm{\qzero} &= \qzero^n \bm{e} + i \Delta t \left( \kappa|\qzero^n|^2\qzero^n \bm{\hat{\tilde{a}}} + \kappa \hat{\tilde{A}}(|\bm{\qzero}|^2\bm{\qzero}) + \ddqzero^n \bm{\hat{a}} + \hat{A}\bm{\ddqzero}\right), \\
\qzero^{n+1} &= \qzero^n + i \Delta t \left( \kappa|\qzero^n|^2\qzero^n\hat{\tilde{b}}_1 + \kappa {\pmb{\hat{\tilde{b}}}^T}(|\bm{\qzero}|^2\bm{\qzero}) + \ddqzero^n \hat{b}_1 + \bm{\hat{b}}^T \bm{\ddqzero} \right).
\end{align}
\end{subequations}
This formulation aligns with the numerical scheme derived from the CK-type GSA ARK methods, as outlined in \eqref{GSA tableau}, when applied to the NLS equation. To ensure the accuracy of the scheme for the auxiliary variable \(\qone\), we use equation \eqref{eqn: GSA q1 component}. Utilizing the invertibility of \(\hat{A}\), we derive:
\begin{align}
i \frac{\Delta t}{\tau} (\bm{\dqzero} - \bm{\qone}) &= - \hat{A}^{-1} (\bm{\qone} - \qone^n \bm{e}) -i \frac{\Delta t}{\tau} (\dqzero^n - \qone^n) \hat{A}^{-1} \bm{\hat{a}}.
\end{align}
Substituting these into the update equation yields:
\begin{subequations}
\begin{align}
\qone^{n+1} &= \qone^n -i \frac{\Delta t}{\tau} (\dqzero^n - \qone^n) \hat{b}_1  + \bm{\hat{b}^T}\left(\hat{A}^{-1} (\bm{\qone} - \qone^n \bm{e}) +i \frac{\Delta t}{\tau} (\dqzero^n - \qone^n) \hat{A}^{-1} \bm{\hat{a}}\right) \\ 
&= \bm{\hat{b}^T}\hat{A}^{-1} \bm{\qone} + \left(1 - \bm{\hat{b}^T}\hat{A}^{-1} \bm{e}\right)\qone^n +i \frac{\Delta t}{\tau} \left(-\hat{b}_1 + \bm{\hat{b}^T}\hat{A}^{-1} \bm{\hat{a}}\right)(\dqzero^n - \qone^n).
\end{align}
\end{subequations}
Assuming the method is GSA, the stiff accuracy condition \(\bm{\hat{b}}^T \hat{A}^{-1} = [0, \dots, 0, 1]\) in \(\mathbb{R}^{s-1}\) implies that \(1 - \bm{\hat{b}}^T \hat{A}^{-1} \bm{e} = 0\) and \(\bm{\hat{b}}^T \hat{A}^{-1} \bm{\hat{a}} = \hat{b}_1 \). This leads us to:
\begin{align}
\qone^{n+1} &= \bm{\hat{b}}^T \hat{A}^{-1} \bm{\qone} = \qone^{(s)}.
\end{align}
As \(\tau \to 0\), we find that \((\qone^{n+1})_0 = (\qone^{(s)})_0\). By employing the expressions derived in \eqref{q0 p0}, we can express  \((\qone^{n+1})_0 =\partial_x(\qzero^{(s)})_0 \). Due to the GSA property of the scheme, it follows from \eqref{components update} and \eqref{solutions update}  that \((\qzero^{(s)})_0 = (\qzero^{n+1})_0\). Consequently, this leads us to \((\qone^{n+1})_0 = \partial_x(\qzero^{n+1})_0\). Thus, we establish the asymptotic-preserving property of the method.
\end{proof}

Note that the initial data is assumed to be well-prepared at time $ t_n $. This assumption is justified by noting that if the initial condition satisfies the well-preparedness condition specified in \eqref{thm: well-prepared}, then \eqref{thm:typeII} (for $n = 0 $) guarantees that the solution remains well-prepared at step $ n = 1 $. By induction, it follows that the auxiliary component remains on the local equilibrium manifold for all subsequent time steps. The GSA property is essential to ensure that the auxiliary variable lies on the local equilibrium manifold, which in turn preserves the accuracy of $\qzero$ and the auxiliary variable $\qone$.

\subsection{Fully-discrete mass conservation via temporal relaxation} \label{sec:time-relaxation}
In this section we describe how fully-discrete mass conservation can be achieved
using the technique of relaxation in time~\cite{ketcheson2019relaxation, ranocha2020general, ranocha2020relaxation}.
Note that the use of the term \emph{relaxation} here is entirely separate from the hyperbolic
relaxation of the NLS equation.

In this work, we focus on preserving a single invariant at the fully discrete
level using ImEx RK methods. Applications of the relaxation technique with ImEx methods for conserving multiple invariants can be found in 
\cite{biswas2023multiple,biswas2024accurate}. For purposes of the time relaxation technique,
all details of the time discretization are unimportant; we only need to consider
that we have a map from $Q^n$ to $Q^{n+1}$ and that this map is locally consistent
to at least second order in $\Delta t$.

We define the new approximation, dependent on the parameter $\gamma_n$:
\begin{equation}
    Q(t_n + \gamma_n \Delta t) \approx Q^{n+1}_{\gamma_n} = Q^n + \gamma_n \Delta t (Q^{n+1} - Q^n),
\end{equation}  
and we choose the value of $\gamma_n$ so as to enforce the conservation condition
\begin{equation}
    \|Q^{n+1}_{\gamma_n}\|^2 = \|Q^n\|^2.
\end{equation}
It can be shown that a suitable value of $\gamma_n$ always exists; in fact in
the present setting it is given by:
\begin{equation}
    \gamma_n = \frac{\sum_{\omega=0}^{1}\sum_{i,j=1}^{s}\left\{\left(\tilde{b}_{i} f^i +  b_{i} g^i\right)_\omega,\left(\tilde{a}_{ij} f^j +  a_{ij} g^j\right)_\omega\right\}}
    {\|\sum_{i=1}^{s}(\tilde{b}_{i} f^i +  b_{i} g^i)_0\|^2+\tau\|\sum_{i=1}^{s}(\tilde{b}_{i} f^i +  b_{i} g^i)_1\|^2}
\end{equation}
where $f^i=f(Q^{(i)})$, $g^i=g(Q^{(i)})$ , $\{A,B\}\equiv(\langle A,B\rangle+\langle A,B\rangle)$ and $\langle A,B\rangle\equiv\int (A^*B)dx$
Furthermore, this value is sufficiently close to unity 
(\( \gamma_n = 1 + \mathcal{O}(\Delta t^{p-1}) \)) so that, if one
interprets the solution $Q^{n+1}_{\gamma_n}$ as an approximation at time
$(t_n + \gamma_n \Delta t)$, then the order of accuracy of the original
time stepping scheme is preserved.

\section{Numerical tests} \label{sec:tests}
In this section, we present numerical results to verify the theoretical
findings discussed in the previous sections. The NLSH system is
semi-discretized in space using the Fourier spectral differentiation matrix
$D$, as described in Section~\ref{sec:space-disc}. The NLS 
equation is also semi-discretized using the Fourier differentiation matrix for
the second derivative term. To integrate both semi-discretized systems in time,
we employ ImEx Runge–Kutta schemes with different properties, specifically
those known to be stiffly 
accurate (SA) and/or First Same As Last (FSAL). A scheme is called FSAL if its
explicit part satisfies $\tilde{a}_{si} = \tilde{b}_i$, and it is called SA if
the DIRK method satisfies $a_{si} = b_i$.  The methods used are listed in Table
\ref{tab:methods}.

\begin{table}
\centering
\caption{ImEx Runge-Kutta methods and some of their properties.  See the text for
an explanation of the abbreviated terms.}
\begin{tabular}{|l|c|c|c|c|c|}
\hline
Method                                    & Order & Type & SA  & FSAL & GSA \\ \hline
SSP2-ImEx(3,3,2) \cite{boscarino2024}     & 2     & I    & Yes & No   & No \\ \hline
SSP3-ImEx(3,4,3) \cite{boscarino2024}     & 3     & I    & No  & No   & No \\ \hline
AGSA(3,4,2)      \cite{boscarino2024}     & 2     & I    & Yes & Yes  & No \\ \hline
ARS(4,4,3) \cite{ascher1997implicit}      & 3     & II   & Yes & Yes  & Yes \\ \hline
ARK3(2)4L[2]SA \cite{kennedy2003additive} & 3     & II   & Yes & No   & No \\ \hline
ARK4(3)6L[2]SA \cite{kennedy2003additive} & 4     & II   & Yes & No   & No \\ \hline
\end{tabular}
\label{tab:methods}
\end{table}

We start by studying the convergence of numerical solutions of the NLSH system
to those of the NLS equation: qualitatively by comparing solution profiles, and
quantitatively in terms of the AP property in
Subsection~\ref{subsec:numeric_AP}. Then, we numerically investigate the AA
property in 
Subsection~\ref{subsec:AAP}. In Subsection~\ref{subsec:relax}, we show that the
relaxation techniques introduced in~\cite{ketcheson2025approximation}, which
improve the conservation properties of NLS solutions, can also be used to
conserve the corresponding conserved quantities in the NLSH system. 
Finally, in Subsection~\ref{subsec:DSW}, we consider a smoothed Riemann problem
for the defocusing NLS equation ($\kappa<0$) comparing results for the NLSH system
with the NLS solution.

\subsection{Verification of Asymptotic Preserving Property}
\label{subsec:numeric_AP}
We first consider the initial value problem with $u(x,t=0) = \sech(x)$, with
periodic boundary conditions on domain $x\in[-16,16]$ and $\kappa = 8$.  The solution in this case
consists of two solitons in a bound state that interact in a time-periodic
manner.  Solutions of the NLS and NLSH equation for this case, as well as the
case with $\kappa=18$ (which produces a 3-soliton bound state)
are shown in Figure \ref{fig:solitons_23}. 
As expected, the NLSH solution tends to that of NLS as $\tau \to 0$.
We also see that, for a given value of $\tau$, larger differences
are visible in the 3-soliton case.  This is likely due to the presence
of larger gradients (and larger values of higher derivatives) in the 3-soliton solution.

\begin{figure}
    \centering
    \includegraphics[width=\linewidth]{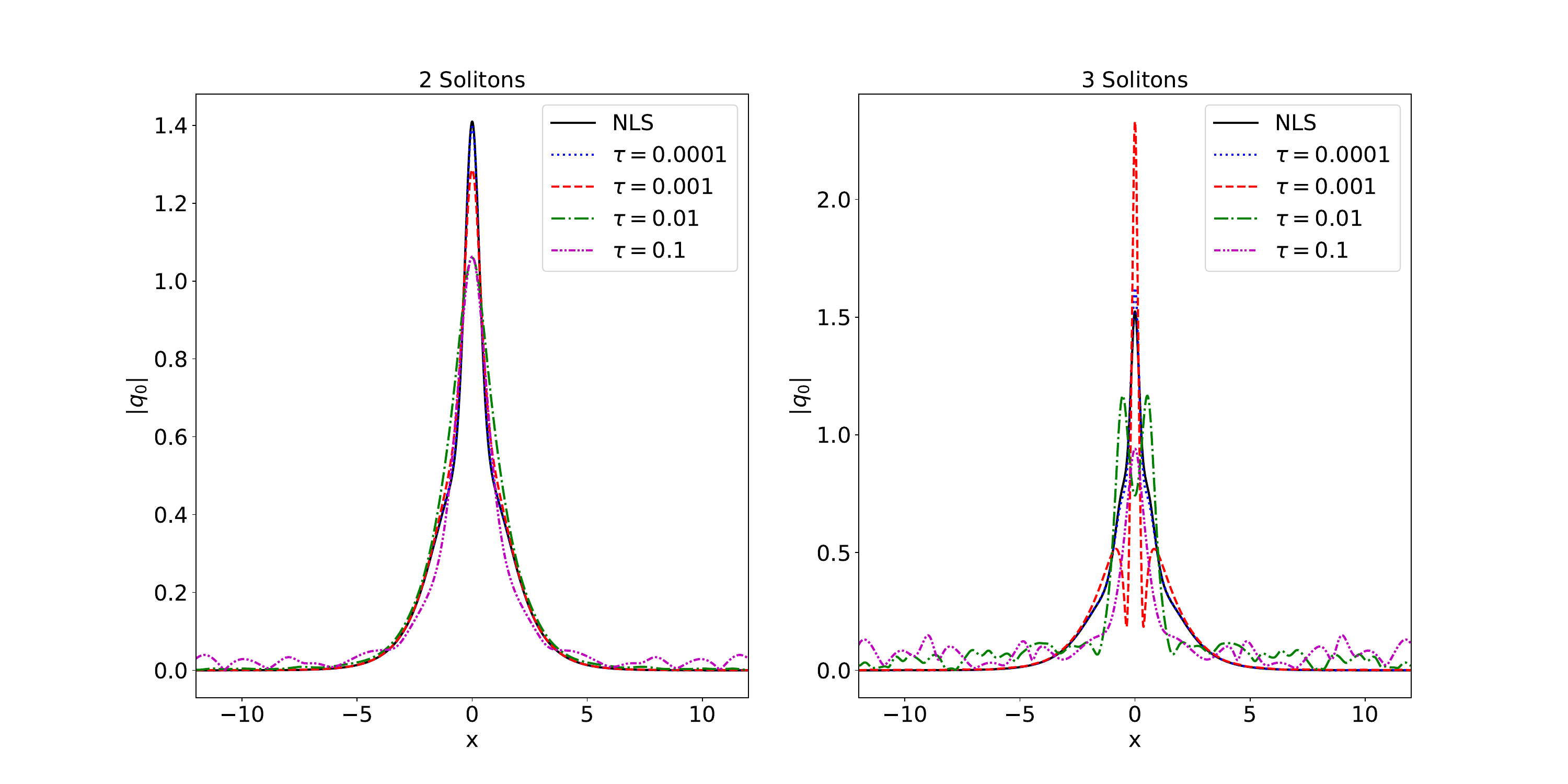}
    \caption{Bound state soliton solutions for 2 (left) and 3 (right) solitons.
    As $\tau\to 0$ the NLSH solution converges to NLS solution.}
    \label{fig:solitons_23}
\end{figure}

We perform numerical experiments to further investigate the numerical convergence of hyperbolized system to the original system. For these experiments, 
we consider the spatial domain of $x\in[-16,16]$ discretized with $2^{11}$ grid points. We evolve both the NLS system and NLSH system using the range 
of ImEx schemes described above, and look at the difference between the numerical solutions of NLS and NLSH (referred to 
as \emph{hyperbolization error}) at $T=5$. Specifically, we compute $\|  u-q_0\|_2 $ and $\| Du-q_1 \|_2 $, where $D$ is
the discrete differentiation operator used in our spatial discretization. Note that the errors and estimated order of
convergence (EOC) presented in tables \ref{tab:AP-SSP2}-\ref{tab:AP-ARK4} are calculated using the $\mathcal{L}_2$ norm of the
hyperbolization error.

\subsubsection{AP results for type I ImEx-RK methods}
In Table \ref{tab:AP-SSP2}, we present the errors for SSP2-ImEx(3,3,2). The implicit part in this scheme is SA but the explicit part is not FSAL, so the overall method is not GSA. Our theory from Section \ref{sec:AP} guarantees convergence for $q_0$, but not for $q_1$. 
Interestingly, we do observe some convergence for $q_1$, although the rate is not consistent and falls off at small values of $\tau$.
Next we test SSP3-ImEx(3,4,3). This scheme is neither SA nor FSAL and hence lacking the GSA property,
so again the theory only guarantees convergence of $q_0$.  In this case the convergence for $q_1$ falls off much earlier.

Table \ref{tab:AP-AGSA} shows results for the scheme AGSA(3,4,2). This method satisfies the GSA property, so both $q_0$ and
$q_1$ are gauranteed to converge, which is what we observe.

\begin{table}
\centering
\caption{Errors and estimated order of convergence for the 2-soliton problem with SSP2-IMEX(3,3,2).}
\begin{tabular}{|c|c|c|c|c|}
\hline
$\tau$ & $|u-q_0|_2$ & EOC $q_0$ & $|Du-q_1|_2$ & EOC $q_1$ \\ 
\hline
0.01& 4.411e-01 &   & 4.966e-01 &  \\
\hline
0.0001& 7.697e-03 & 0.879 & 1.301e-02 & 0.791 \\
\hline
1e-06& 7.744e-05 & 0.999 & 1.318e-04 & 0.997 \\
\hline
1e-08& 7.745e-07 & 1.0 & 1.421e-06 & 0.984 \\
\hline
1e-10& 7.745e-09 & 1.0 & 2.747e-07 & 0.357 \\
\hline
\end{tabular}
\label{tab:AP-SSP2}
\end{table}

\begin{table}
\centering
\caption{Errors and estimated order of convergence for the 2-soliton problem with SSP3-IMEX(3,4,3).}
\begin{tabular}{|c|c|c|c|c|}
\hline
$\tau$ & $|u-q_0|_2$ & EOC $q_0$ & $|Du-q_1|_2$ & EOC $q_1$ \\ 
\hline
0.01& 4.411e-01 &   & 4.966e-01 &  \\
\hline
0.0001& 7.697e-03 & 0.879 & 1.301e-02 & 0.791 \\
\hline
1e-06& 7.744e-05 & 0.999 & 1.347e-04 & 0.992 \\
\hline
1e-08& 7.744e-07 & 1.0 & 2.081e-04 & -0.095 \\
\hline
1e-10& 7.744e-09 & 1.0 & 2.092e-04 & -0.001 \\
\hline
\end{tabular}
\label{tab:AP-SSP3}
\end{table}


\begin{table}
\centering
\caption{Errors and estimated order of convergence for the 2-soliton problem with AGSA(3,4,2).}
\begin{tabular}{|c|c|c|c|c|}
\hline
$\tau$ & $|u-q_0|_2$ & EOC $q_0$ & $|Du-q_1|_2$ & EOC $q_1$ \\
\hline
0.01& 4.411e-01 &   & 4.966e-01 &  \\
\hline
0.0001& 7.651e-03 & 0.88 & 1.292e-02 & 0.792 \\
\hline
1e-06& 7.748e-05 & 0.997 & 1.318e-04 & 0.996 \\
\hline
1e-08& 7.750e-07 & 1.0 & 1.319e-06 & 1.0 \\
\hline
1e-10& 7.750e-09 & 1.0 & 1.316e-08 & 1.0 \\
\hline
\end{tabular}
\label{tab:AP-AGSA}
\end{table}

\subsubsection{ Type II ImEx-RK methods}
Now we present the results for type II methods.  Table \ref{tab:AP-ARS} shows
the convergence rates for ARS(4,4,3). This method is GSA and, as expected,
shows convergences for both components ($q_0$ and $q_1$). The next two tables
(\ref{tab:AP-ARK3} and \ref{tab:AP-ARK4}) present results for 
ARK3(2)4L[2]SA and ARK4(3)6L[2]SA, respectively.  Both of these methods are not GSA but are SA
in their implicit part. While we do not have any theoretical guarantees for these
methods, it appears that the higher order method (ARK4(3)6L[2]SA) gives convergence for
$q_0$, while the convergence of $q_1$ begins to stagnate at small values of $\tau$.
Results for the ARK3(2)4L[2]SA, shown in Table \ref{tab:AP-ARK3}, 
are nearly identical to those for SSP2-IMEX(3,3,2) in Table \ref{tab:AP-SSP2}.

It is notable that Table 1 and Table 5 show almost identical results.
Both of the methods in question are SA but not FSAL.  We hypothesize that,
even though the Type II method is one order higher than the Type I method, 
on a sufficiently fine numerical mesh
the hyperbolization errors in $q_0$ and $q_1$ are not significantly affected by
the choice betwween these two methods.

\begin{table}
\centering
\caption{Errors and estimated order of convergence for the 2-soliton problem with ARS(4,4,3).}
\begin{tabular}{|c|c|c|c|c|}
\hline
$\tau$ & $|u-q_0|_2$ & EOC $q_0$ & $|Du-q_1|_2$ & EOC $q_1$ \\ 
\hline
0.01& 4.411e-01 &   & 4.966e-01 &  \\
\hline
0.0001& 7.697e-03 & 0.879 & 1.301e-02 & 0.791 \\
\hline
1e-06& 7.744e-05 & 0.999 & 1.317e-04 & 0.997 \\
\hline
1e-08& 7.744e-07 & 1.0 & 1.318e-06 & 1.0 \\
\hline
1e-10& 7.744e-09 & 1.0 & 1.317e-08 & 1.0 \\
\hline
\end{tabular}
\label{tab:AP-ARS}
\end{table}
\begin{table}
\centering
\caption{Errors and estimated order of convergence for the 2-soliton problem with ARK3(2)4L[2]SA.}
\begin{tabular}{|c|c|c|c|c|}
\hline
$\tau$ & $|u-q_0|_2$ & EOC $q_0$ & $|Du-q_1|_2$ & EOC $q_1$ \\ 
\hline
0.01& 4.411e-01 &   & 4.966e-01 &  \\
\hline
0.0001& 7.697e-03 & 0.879 & 1.301e-02 & 0.791 \\
\hline
1e-06& 7.744e-05 & 0.999 & 1.318e-04 & 0.997 \\
\hline
1e-08& 7.744e-07 & 1.0 & 1.425e-06 & 0.983 \\
\hline
1e-10& 7.744e-09 & 1.0 & 2.838e-07 & 0.35 \\
\hline
\end{tabular}
\label{tab:AP-ARK3}
\end{table}

\begin{table}
\centering
\caption{Errors and estimated order of convergence for the 2-soliton problem  with ARK4(3)6L[2]SA.}
\begin{tabular}{|c|c|c|c|c|}
\hline
$\tau$ & $|u-q_0|_2$ & EOC $q_0$ & $|Du-q_1|_2$ & EOC $q_1$ \\
\hline
0.01& 4.411e-01 &   & 4.966e-01 &  \\
\hline
0.0001& 7.697e-03 & 0.879 & 1.300e-02 & 0.791 \\
\hline
1e-06& 7.744e-05 & 0.999 & 1.317e-04 & 0.997 \\
\hline
1e-08& 7.744e-07 & 1.0 & 1.318e-06 & 1.0 \\
\hline
1e-10& 7.744e-09 & 1.0 & 1.317e-08 & 1.0 \\
\hline
\end{tabular}
\label{tab:AP-ARK4}
\end{table}


\subsection{Asymptotic Accuracy}
\label{subsec:AAP}
In this section, we numerically study the asymptotic accuracy of different ImEx schemes. We use ground state solutions of the NLSH system ($q_g$ in \eqref{eq:translation_rotation_ansatz}) as 
test cases here, in particular we consider 
$\kappa=1$, $\mu=3$, and $\qt^+$ (see \eqref{eq:explicit_solitary_NLSH}) with $K_1=0$. 
We take the domain $[-5\pi,5\pi]$ discretized with $2^{11}$ points in space, and
study convergence for the following schemes: AGSA(3,4,2), SSP3-ImEx(3,4,3), ARK3(2)4L[2]SA, and ARS(4,4,3). 
We compare the numerical solution of the NLSH system with the exact solution,
denoted by $q_j^{\text{ex}}$.
In Figure \ref{fig:exact_AA}, we plot the norm of the differences at $t=5$, for $\tau=10^{-2}$ and 
$\tau=10^{-5}$. 
For smaller values of $\tau$, we obtained results almost indistinguishable
from those shown with $\tau=10^{-5}$.

\begin{figure}
    \centering
    \includegraphics[width=1.\linewidth]{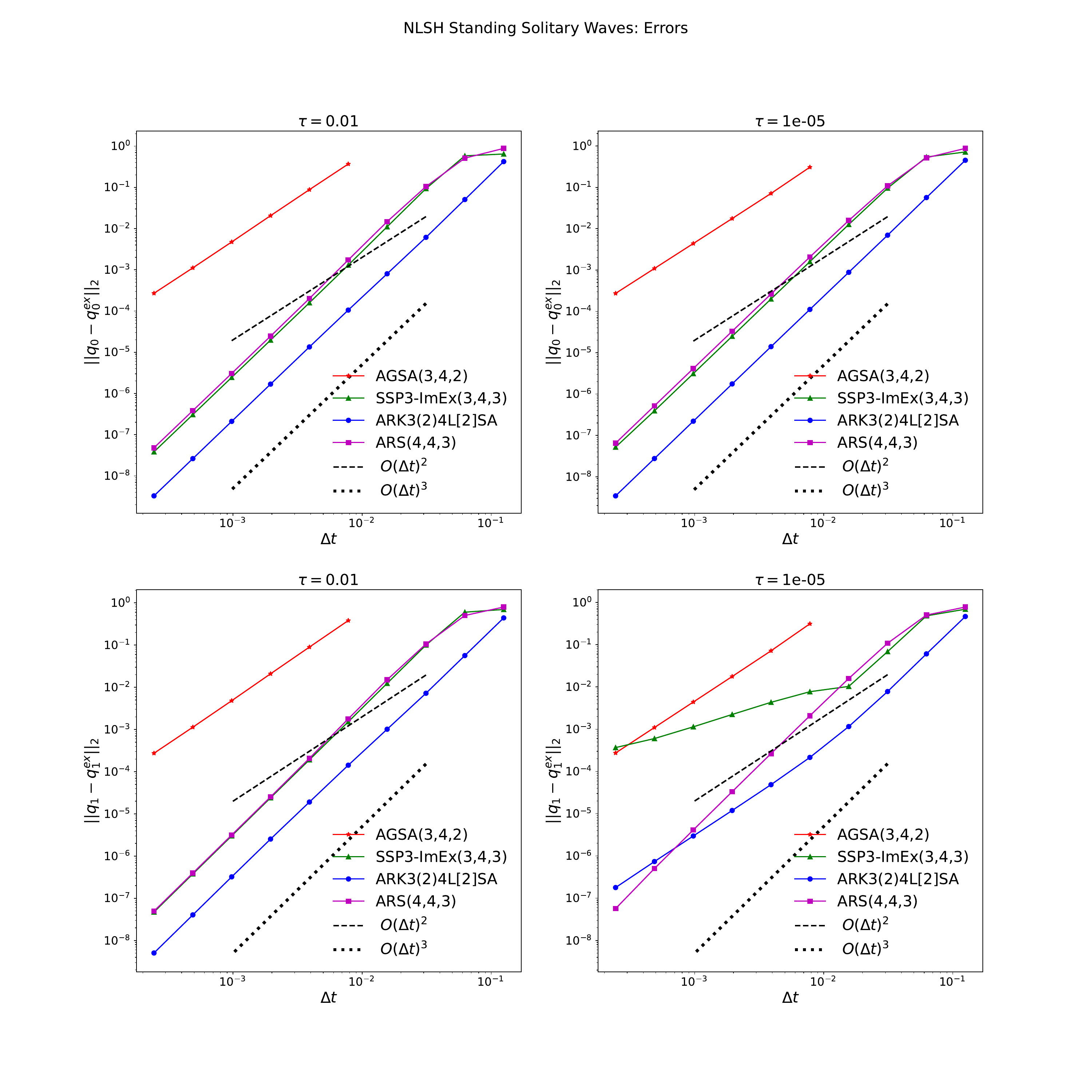}
    \caption{Error convergence for $q_0$ (top row) and $q_1$ (bottom row) for two relaxation parameters. The reference solutions $q_0^{ex}$ and $q_1^{ex}$ 
    are exact solitary (standing) wave solutions. The methods AGSA(3,4,2), SSP3-ImEx(3,4,3), and 
    ARS(4,4,3) exhibit the expected AA property for all components, while ARK3(2)4L[2]SA shows it for the $q_0$-component, beyond the theoretical guarantee.}
    \label{fig:exact_AA}
\end{figure}

Consistent with the theoretical results derived in 
the previous sections, we find that the type I GSA method AGSA(3,4,2) and the type II method ARS(4,4,3) show AA for both components. 
For SSP3-ImEx(3,4,3), the theory guarantees AA for $q_0$ but not for $q_1$, which is exactly what we observe.  Meanwhile, for ARK3(2)4L[2]SA even 
though the theory does not guarantee AA for either component, we do observe AA for $q_0$.

\subsection{RK relaxation and error growth over time}
\label{subsec:relax}
As discussed above, both NLS and NLSH possess conserved quantities, and
numerical conservation of discrete versions of these is highly desirable.
Relaxation techniques \cite{ketcheson2025approximation} offer a minimally
invasive way to modify RK schemes to improve their conservation properties.
Here we apply relaxation to enforce conservation of the the modified mass
($\bar{I}_1$) for NLSH.  Results for the ARS(4,4,3) scheme are shown in Figure
\ref{fig:relax4x4}.
Here we compute:
\begin{itemize}
    \item For discretizations of NLS: the norm of the difference between the
    exact and numerical solutions of the NLS equation.
    \item For discretizations of NLSH: the norm of the difference between the
    exact solution of NLS and the numerical solution of NLSH.
\end{itemize}
Note that in both cases, the ``error" is computed with respect to the exact solution
of the NLS equation; it therefore includes both the numerical error and the
hyperbolization error.  In Figure \ref{fig:relax4x4} we see that:
\begin{itemize}
    \item For large values of $\tau$ and at early times, the hyperbolization error dominates.
    \item The use of relaxation leads to much smaller numerical errors at late times.
\end{itemize}
Because of these two effects, we see for instance that with $\tau=10^{-3}$ or $\tau=10^{-4}$,
the hyperbolization error dominates at early times while the numerical error dominates
later.  Because of this, the NLS solutions have smaller error early on, but eventually
the relaxation solutions have smaller error, and in particular the solution of
NLSH with relaxation is a more accurate approximation of the solution of NLS than what
is obtained by discretizing NLS without relaxation.
Very similar behavior was observed using other time discretizations.

\begin{figure}
    \includegraphics[width=1.2\linewidth]{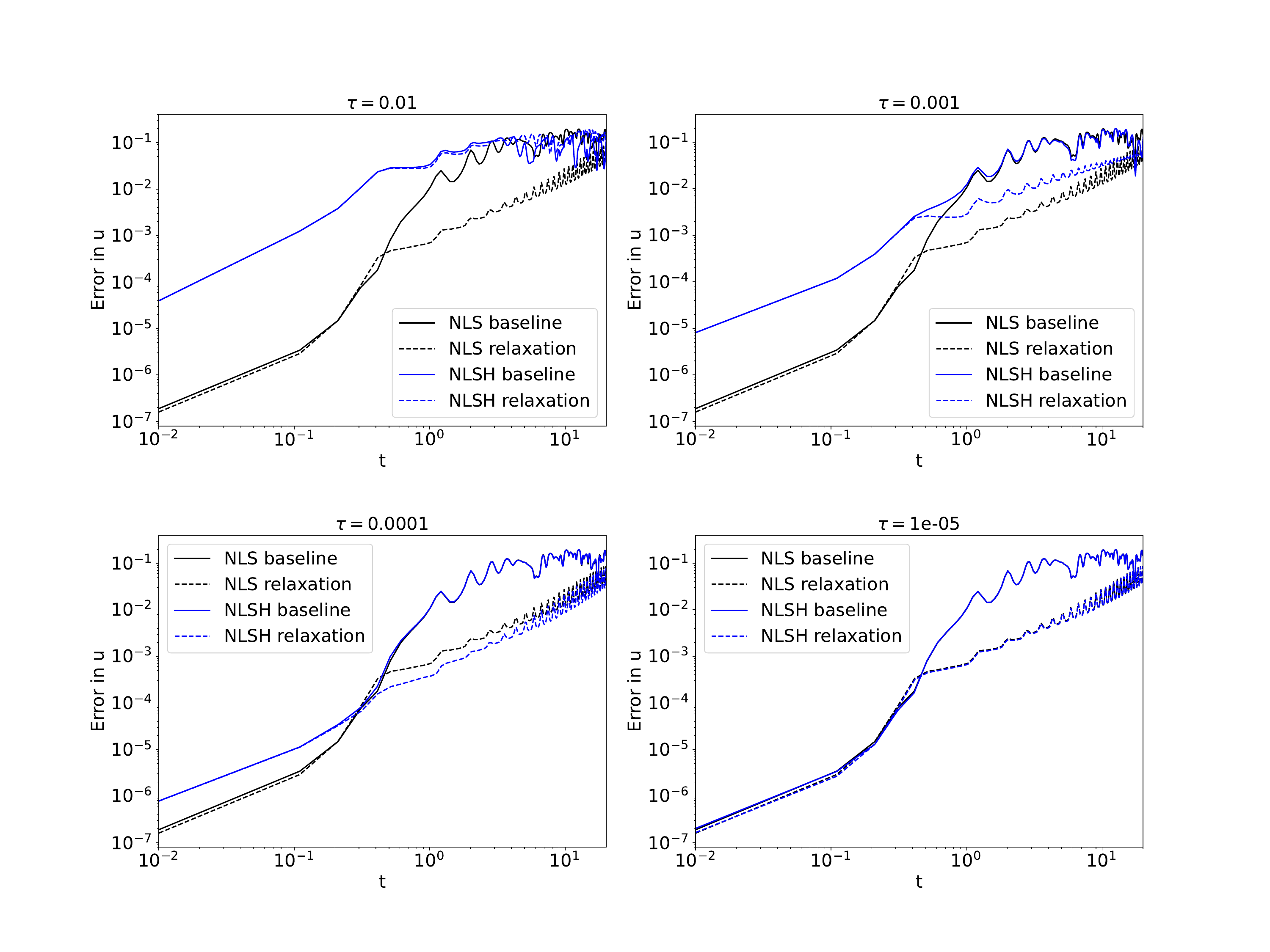}
    \caption{Error versus time for numerical solutions of NLS and NLSH, with
            and without relaxation, for 4 different values of $\tau$. 
            } \label{fig:relax4x4}
\end{figure}


\subsection{A defocusing NLS test case: smoothed Riemann problem}
\label{subsec:DSW}
For $\kappa<0$, \eqref{Eq:NLS} is referred to as the defocusing NLS equation.
In this section we study the approximation of a defocusing NLS solution via the
NLSH system.
We consider a Riemann problem involving the formation of a rarefaction and a
dispersive shock wave.  Following \cite{dhaouadi2019extended}, we work with
hydrodynamic variables $(\rho, \phi)$, which are related to the NLS
solution $u$ as follows:
\begin{align}
    u = \sqrt{\rho}e^{i\theta} \\
    \phi = \nabla\theta
\end{align}

We take $\kappa=-1$
and an initial condition given by a smoothed step function:
\begin{subequations} \label{riemann-IC}
\begin{align}
    \rho(x,t=0) & = \frac{\rho_L+\rho_R}{2} + \left( \frac{\rho_R-\rho_L}{2} \right)\tanh\left(100x\right) \\
    \phi(x,t=0) & = 0.
\end{align}
\end{subequations}
We take $\rho_L = 2$, $\rho_R = 1$, and consider three different values of
$\tau$.  A significant issue arises in studying a Riemann problem when our
numerical method requires the use of periodic boundary conditions; the discontinuity
between the states at the left and right domain boundary constitutes an additional
Riemann problem that generates waves that can overlap with the part of the
solution in which we are interested.  In order to avoid this, we use a very large
domain, $x\in(-1600,1600)$, and end the simulation before significant perturbations
from the boundary Riemann problem have entered the domain of interest.
The whole domain is discretized with $m=16384$ points and $dt=10^{-4}$.

We present the solution in Figure \ref{fig:DSW}, where we plot $\rho$ and
$\phi$ as a function of $x$ at $t=70$.  The plot shows the solution over
the domain $x\in[-335,335]$.
For values of $\tau$ shown, there is relatively good agreement between
the NLSH and NLS solutions.  It should be kept in mind that significantly 
larger values of $\tau$ (for example, $\tau=0.05$) lead to solutions that
differ dramatically from the NLS solution.

\begin{figure}
    \centering
    \includegraphics[width=1.0\linewidth]{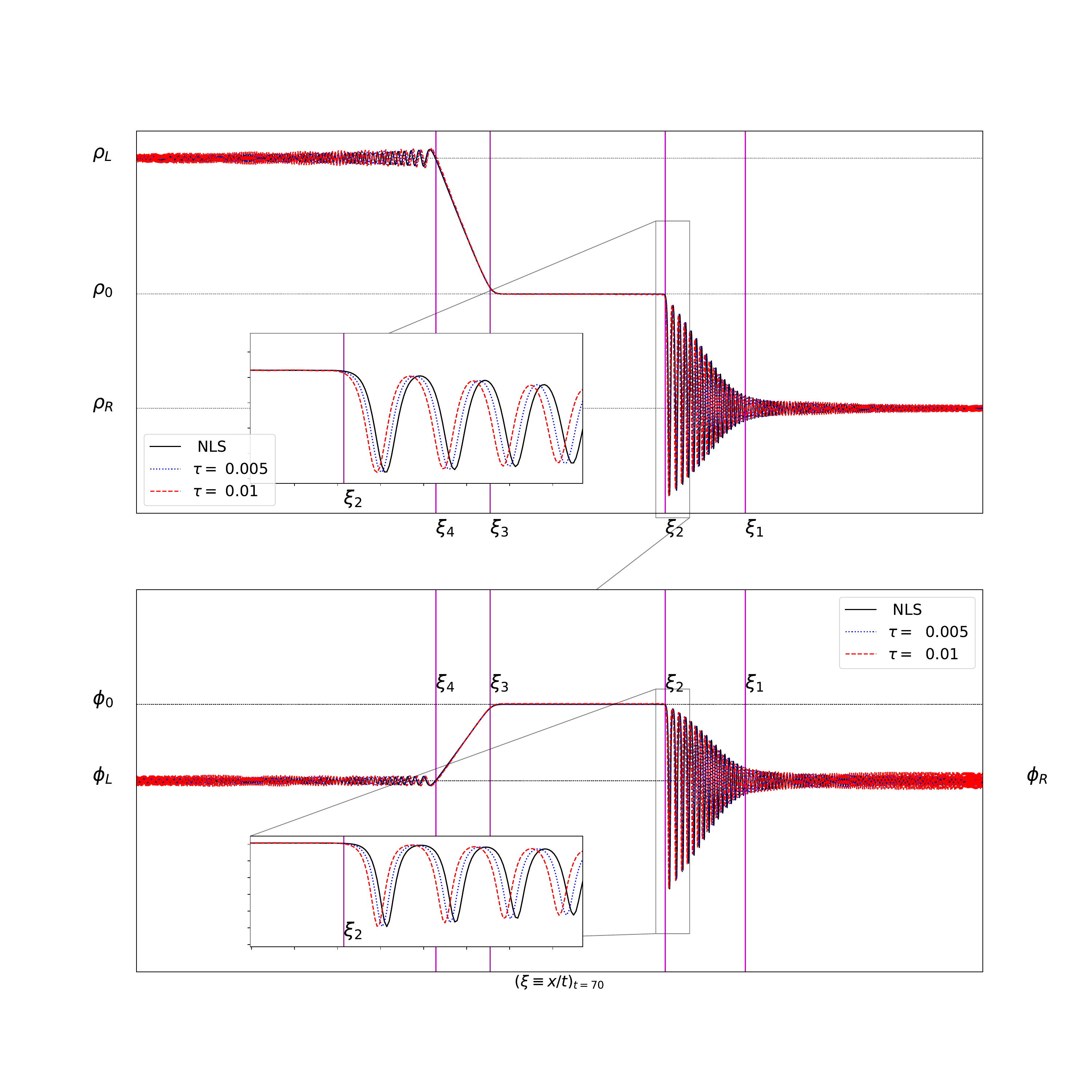}
    \caption{Solution of defocusing NLS at $t=70$ with initial smoothed step function \eqref{riemann-IC}.
    The theoretical edges of the rarefaction and dispersive shock wave are marked by $(\xi_3, \xi_4)$ and
    $(\xi_1, \xi_2)$, respectively.}
    \label{fig:DSW}
\end{figure}

Whitham modulation theory provides estimates of the leading and trailing edges
of the shock and rarefaction waves, which are marked (in terms of the 
self-similarity variable $\xi=x/t$) on the plot and are in
agreement with the numerical results.
For additional details regarding this problem and its solution via modulation
theory we refer the reader to \cite{dhaouadi2019extended}.
It is interesting that both modulation theory and hyperbolic relaxation provide an
approximation of the NLS solution via a hyperbolic PDE system.

\section{Discussion}
In the development of hyperbolic approximations to high-order PDEs, a key
objective is the preservation, as far as possible, of the mathematical
properties of the original model.
As we have demonstrated, the NLSH system inherits several key properties
of the NLS equation: Hamiltonian structure, conserved quantities, and solitary
waves.  In particular, the preservation of three nonlinear conserved quantities
is remarkable; to our knowledge, no other existing hyperbolization preserves more than one.
Compared to the other existing hyperbolic approximation of NLS \cite{dhaouadi2019extended}, the
one presented here is advantageous since it applies in both the focusing and
defocusing regimes, preserves more of the structure of the original equation,
and results in a smaller system of equations.
The energy-preserving and asymptotic-preserving discretizations we have proposed
carry these properties over into the discrete approximations.

In one space dimension, numerical solutions of dispersive wave equations like
NLS can be computed rather efficiently with existing numerical methods.  However,
in higher dimensions and with non-periodic boundary conditions, the numerical solution
of such problems becomes more costly, and we anticipate that extensions of the
hyperbolic approximation developed here may be of significant practical interest.
This is the subject of current work.

At the same time, the fact that the dynamics of an integrable, dispersive nonlinear
wave equation like NLS can be approximated arbitrarily well by a hyperbolic system
is interesting in itself.  There are a wide range of important questions regarding
the NLSH system which have yet to be considered, such as the application of Whitham
modulation theory, the study of periodic waves and dispersive shocks, breathers, and
so forth.  These topics are left for future work.

The preservation of not just one but (at least) three conserved quantities
related to those of the NLS equation is remarkable and seems to be unique among
hyperbolic approximations of higher-order equations.  Even the hyperbolic approximation
of the (integrable) Korteweg-de Vries equation (see \cite{besse2022perfectly,biswas2024kdvh}) seems to possess only one conserved
nonlinear functional.  The question of whether there exist additional conserved
quantities for NLSH remains open.

\bibliographystyle{plain}
\bibliography{refs}

\end{document}